 \numberwithin{equation}{section}
\newtheorem{theorem}{Theorem}[section]
\newtheorem{lemma}[theorem]{Lemma}
\newtheorem{proposition}[theorem]{Proposition}
\newtheorem{corollary}[theorem]{Corollary}
\theoremstyle{definition}
\newtheorem{example}[theorem]{Example}
\newtheorem{remark}[theorem]{Remark}
\newcommand{\one}{\ensuremath{(\mathrm{i})}}
\newcommand{\two}{\ensuremath{(\mathrm{ii})}}
\newcommand{\three}{\ensuremath{(\mathrm{iii})}}
\newcommand{\four}{\ensuremath{(\mathrm{iv})}}
\newcommand{\CC}{\ensuremath{\mathbb{C}}} 
\newcommand{\kk}{\ensuremath{\Bbbk}} 
\newcommand{\NN}{\ensuremath{\mathbb{N}}} 
\newcommand{\QQ}{\ensuremath{\mathbb{Q}}}
\newcommand{\ZZ}{\ensuremath{\mathbb{Z}}} 
\newcommand{\M}{\ensuremath{\mathfrak{M}}}
\newcommand{\git}{\ensuremath{\operatorname{\!/\!\!/\!}}}
\newcommand{\GL}{\operatorname{GL}}
\newcommand{\head}{\operatorname{h}}
\newcommand{\Hilb}{\operatorname{Hilb}}
\newcommand{\hn}{\operatorname{\mathfrak{h}}} 
\newcommand{\Hom}{\operatorname{Hom}}
\newcommand{\Mov}{\operatorname{Mov}}
\newcommand{\Nef}{\operatorname{Nef}}
\newcommand{\Pic}{\operatorname{Pic}}
\newcommand{\Class}{\operatorname{Cl}}
\newcommand{\Proj}{\operatorname{Proj}}
\newcommand{\rank}{\operatorname{rk}}
\newcommand{\relint}{\operatorname{relint}} 
\newcommand{\Rep}{\operatorname{Rep}}
\newcommand{\Seshadri}{\operatorname{S}}
\newcommand{\SL}{\operatorname{SL}}
\newcommand{\Sym}{\operatorname{Sym}}
\newcommand{\supp}{\operatorname{supp}}
\newcommand{\tail}{\operatorname{t}}
\newcommand{\reg}{\operatorname{reg}}
\newcommand{\cone}[1]{\sigma_{#1}}
\newcommand{\ali}[1]{\textcolor{blue}{#1}}
\title{Hilbert schemes for crepant partial resolutions}
\author{Alastair Craw} 
\author{Ruth Wye} 
\address{Department of Mathematical Sciences, University of Bath, Claverton Down, Bath BA2 7AY, UK.}
\email{a.craw@bath.ac.uk; rep46@bath.ac.uk}
\begin{document}

\begin{abstract}
 For $n\geq 1$, we construct the underlying reduced subscheme of the Hilbert scheme of $n$ points on any crepant partial resolution of a Kleinian singularity as a Nakajima quiver variety for an explicit GIT stability parameter. This generalises and unifies existing quiver variety constructions of the Hilbert scheme of points on the minimal resolution of a Kleinian singularity, and on the Kleinian singularity itself. As a corollary, we compute the nef and movable cones of the Hilbert scheme 
 of $n$ points on any crepant partial resolution of a Kleinian singularity.
  \end{abstract}

 \maketitle
\tableofcontents

\section{Introduction}
Let $n\geq 1$ be a positive integer and let $\kk$ be an algebraically closed field of characteristic zero. A well-studied moduli space in algebraic geometry is the Hilbert scheme of $n$ points on a nonsingular surface $S$ over $\kk$, denoted $\Hilb^n(S)$. Fogarty~\cite{Fogarty68} showed that $\Hilb^n(S)$ is a nonsingular variety of dimension $2n$, while Beauville~\cite{Beauville83} later established that if $S$ is a symplectic surface, then $\Hilb^n(S)$ is a 
nonsingular symplectic variety. More generally, when $S$ is a normal surface with canonical singularities, the first author and Yamagishi~\cite{CY26} showed recently that the underlying reduced subscheme of $\Hilb^n(S)$ is a normal variety of dimension $2n$ with canonical singularities, and furthermore, if $S$ has symplectic singularities, then so does $\Hilb^n(S)$. Here, for the surfaces $S_K$ obtained as crepant partial resolutions of a Kleinian singularity, we establish a stronger statement by constructing each $\Hilb^n(S_K)$ as a Nakajima quiver variety. Throughout, we let $\Hilb^n(S_K)$ denote the underlying reduced subscheme of $\Hilb^n(S_K)$; in fact, the Hilbert scheme is reduced for $n\leq 7$ by \cite{CY26}.

To describe the quiver varieties of interest, let $\Gamma\subset \SL(2,\kk)$ be a nontrivial finite subgroup. Write the irreducible representations of $\Gamma$ as $\rho_0, \rho_1, \dots, \rho_r$ where $\rho_0$ is trivial, let $R(\Gamma)=\bigoplus_{0\leq i\leq r} \ZZ \rho_i$ be the representation ring, and write $\delta\in R(\Gamma)$ for the regular representation. Let $\Pi$ denote the preprojective algebra of a framed version of the affine ADE graph associated to $\Gamma$ by the McKay correspondence, and regard $v \coloneqq (1,n\delta)\in \ZZ\oplus R(\Gamma)$ as a dimension vector for $\Pi$-modules. Following Nakajima~\cite{Nakajima98}, the quiver variety $\M_\theta(\Pi,v)$ parametrises $\Pi$-modules of dimension vector $v$ that are semistable with respect to a stability parameter $\theta$ in $\Theta_v\cong \Hom(R(\Gamma),\QQ)$. The affine GIT quotient satisfies $\mathfrak{M}_0(\Pi,v)\cong \Sym^n(\mathbb{A}^2/\Gamma)$, and variation of GIT quotient gives a projective, crepant partial resolution
\[
\M_\theta(\Pi,v)\longrightarrow \Sym^n(\mathbb{A}^2/\Gamma).
\]
 For $n>1$, this morphism is a resolution if and only if $\theta$ lies in the interior of a top-dimensional cone of the GIT fan in $\Theta_v$; this wall-and-chamber decomposition is determined by a hyperplane arrangement that was computed explicitly in this case by Bellamy--Craw~\cite[Theorem~1.1]{BC20}.

To state our main result, let $S\to \mathbb{A}^2/\Gamma$ be the minimal resolution of the corresponding Kleinian singularity. Choose a subset $K\subseteq I\coloneqq\{1,\dots, r\}$ of the set of nodes in the ADE graph corresponding to the exceptional prime divisors in $S$, and write $S_K\to \mathbb{A}^2/\Gamma$ for the crepant partial resolution given by contracting the $(-2)$-curves in $S$ indexed by elements of $K$. Recall that the relative interior of a cone $\sigma$, denoted $\relint(\sigma)$, is the interior of $\sigma$ in its affine hull.
 
 \begin{theorem}
 \label{thm:mainnew}
 Let $n\geq 1$ and let $S_K\to \mathbb{A}^2/\Gamma$ be a crepant partial resolution. Then $\Hilb^n(S_K)$ is isomorphic to the quiver variety $\M_{\theta_K}(\Pi,v)$ for any $\theta_K$ in the relative interior of the GIT cone
 \[ 
 \sigma_K \coloneqq \big\{\theta\in \Theta_v \mid \theta(\delta) \geq 0, \theta(\rho_k)=0\text{ for }k\in K, \theta(\rho_i) \geq (n-1) \theta(\delta) \text{ for }i\in I\setminus K\big\}.
 \]
In particular, $\Hilb^n(S_K)$ is a normal, $\QQ$-factorial variety with symplectic singularities that admits a unique projective, symplectic (hence crepant) resolution obtained by variation of GIT quotient.  
 \end{theorem}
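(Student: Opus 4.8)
The plan is to prove the isomorphism $\Hilb^n(S_K)\cong\M_{\theta_K}(\Pi,v)$ by an explicit moduli-theoretic comparison, and then to read off the remaining assertions from the general theory of quiver varieties together with variation of GIT quotient. First I would fix $\theta_K\in\relint(\sigma_K)$ and anchor the argument on the two cases already available in the literature: when $K=\emptyset$ the cone $\sigma_\emptyset=\{\theta\mid\theta(\delta)\geq 0,\ \theta(\rho_i)\geq(n-1)\theta(\delta)\text{ for all }i\in I\}$ is a top-dimensional GIT chamber with $\M_{\theta}(\Pi,v)\cong\Hilb^n(S)$ for $\theta$ in its interior, and when $K=I$ one recovers $\Hilb^n(\mathbb{A}^2/\Gamma)$. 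Using the Bellamy--Craw description of the GIT fan, I would check that $\sigma_K$ is exactly the face of $\overline{\sigma_\emptyset}$ cut out by the equalities $\theta(\rho_k)=0$ for $k\in K$, and that its relative interior meets no other wall of the arrangement, so that $\M_{\theta_K}(\Pi,v)$ is well defined and independent of the choice of $\theta_K\in\relint(\sigma_K)$.

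The heart of the argument realises $\Hilb^n(S_K)$ as a space of modules. On $S_K$ one has a tautological tilting bundle $\mathcal{V}_K$ whose summands have ranks $\dim\rho_0,\dots,\dim\rho_r$, with $\End(\mathcal{V}_K)$ identified with the partial preprojective algebra attached to $\Pi$ via the idempotent determined by $K$. Sending a length-$n$ subscheme $Z\subset S_K$ to $\Hom(\mathcal{V}_K,\mathcal{O}_Z)$, with the canonical surjection $\mathcal{O}_{S_K}\twoheadrightarrow\mathcal{O}_Z$ supplying the one-dimensional framing, produces a $\Pi$-module of dimension vector $v=(1,n\delta)$, since $\mathcal{O}_Z$ has length $n$ and so contributes $n\dim\rho_i$ at the vertex $i$; the vanishing conditions $\theta(\rho_k)=0$ for $k\in K$ are what let me extend this canonically to full dimension $n\delta$ across the contracted vertices. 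I would show this assignment defines a morphism $\Hilb^n(S_K)\to\M_{\theta_K}(\Pi,v)$ and construct a quasi-inverse by reconstructing, from a $\theta_K$-stable module, a cyclic quotient of $\mathcal{V}_K$ and passing to its cosupport.

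The main obstacle is matching the two stability notions exactly. Concretely, I expect the delicate step to be proving that, for $\theta_K\in\relint(\sigma_K)$, a $\Pi$-module of dimension $v$ is $\theta_K$-semistable if and only if it arises from an ideal sheaf $\mathcal{I}_Z$ with $Z\in\Hilb^n(S_K)$. The inequalities $\theta(\rho_i)\geq(n-1)\theta(\delta)$ for $i\in I\setminus K$ should be precisely the bounds ruling out destabilising submodules away from the framing, the constant $(n-1)$ reflecting that a length-$n$ subscheme can concentrate at most $n-1$ of its length transversely to a fixed exceptional curve, while the equalities $\theta(\rho_k)=0$ force the module to be supported along the contracted locus in the expected way. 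Establishing both implications is where the real work lies; I would then upgrade the resulting bijection on closed points to an isomorphism of schemes by comparing the universal families corepresenting the two functors of flat families, using normality of $\Hilb^n(S_K)$ from Craw--Yamagishi to reduce the scheme-theoretic statement to a comparison of tangent spaces.

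Finally, the ``in particular'' assertions follow formally. As a Nakajima quiver variety, $\M_{\theta_K}(\Pi,v)$ is a normal, $\QQ$-factorial variety with symplectic singularities by the general structure theory. For the crepant resolution, the inclusion $\relint(\sigma_K)\subset\overline{\sigma_\emptyset}$ yields, by variation of GIT quotient, a projective birational morphism $\M_{\theta}(\Pi,v)\to\M_{\theta_K}(\Pi,v)$ for $\theta\in\relint(\sigma_\emptyset)$; the source is isomorphic to the nonsingular $\Hilb^n(S)$, and the morphism preserves the symplectic form carried by the quiver varieties, hence is a symplectic, so crepant, resolution. I expect uniqueness to follow from the chamber structure near $\sigma_K$, with $\sigma_\emptyset$ the unique top-dimensional cone whose closure contains $\relint(\sigma_K)$ and over which the quotient is nonsingular, mirroring the uniqueness of the minimal resolution $S\to S_K$ at the level of surfaces.
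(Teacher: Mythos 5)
Your strategy---a direct moduli-theoretic comparison via tautological bundles, generalising Kuznetsov's proof for $K=\varnothing$---is genuinely different from the paper's, which never compares moduli functors: the paper instead invokes the fact that \emph{every} projective crepant partial resolution of $\Sym^n(\mathbb{A}^2/\Gamma)$ is already a quiver variety $\M_\eta(\Pi,v)$ for some $\eta\in F$ (Theorem~\ref{thm:BC20}), and then pins down which GIT cone corresponds to $\Hilb^n(S_K)$ by a combinatorial characterisation (dimension $r-\vert K\vert+1$ via a class-group computation using Iarrobino's theorem, containment of $\sigma_K'$, not lying in $\delta^\perp$, and $\QQ$-factoriality). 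However, your proposal has a concrete error at its combinatorial base. For $n>1$ and $K\neq\varnothing$, the cone $\sigma_K$ is \emph{not} a face of $\overline{\sigma_\varnothing}$: imposing $\theta(\rho_k)=0$ together with the inequality $\theta(\rho_k)\geq(n-1)\theta(\delta)$ forces $\theta(\delta)=0$, so the face of $\sigma_\varnothing$ you describe is $\sigma_K'=\sigma_K\cap\delta^\perp$, which by Proposition~\ref{prop:SymtoSym} is the cone of $\Sym^n(S_K)$, not of $\Hilb^n(S_K)$ (this is visible in Figure~\ref{A2chamberdecompwithcones}: $\sigma_{\{1\}}$ sticks out of the top triangle $\sigma_\varnothing$). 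The top-dimensional GIT cone having $\sigma_K$ as a face is the closure of the \emph{different} chamber $C_K$ of Lemma~\ref{lem:CK*}, which drops the inequalities $\theta(\rho_k)>(n-1)\theta(\delta)$ for $k\in K$ and adds $\theta(\delta)>\sum_{k\in K}\delta_k\theta(\rho_k)$. Distinguishing $\sigma_K$ from $\sigma_K'$---that is, proving the cone attached to $\Hilb^n(S_K)$ genuinely leaves the hyperplane $\delta^\perp$ and has one extra dimension---is precisely the hard content of the paper (Theorem~\ref{thm:HilbisNQV} and Proposition~\ref{prop:PicHilb}), and your proposal does not engage with it.

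This error propagates to your final paragraph: since $\relint(\sigma_K)\not\subset\overline{\sigma_\varnothing}$, VGIT provides no morphism $\Hilb^n(S)\cong\M_{\theta}(\Pi,v)\to\M_{\theta_K}(\Pi,v)$ for $\theta\in\sigma_\varnothing$, and the unique projective crepant resolution of $\Hilb^n(S_K)$ is $\M_\theta(\Pi,v)$ for $\theta\in C_K$, which is in general not $\Hilb^n(S)$ over $\Hilb^n(S_K)$ (already for $K=I$ one obtains the equivariant Hilbert scheme rather than $\Hilb^n(S)$). Two further gaps: first, $\QQ$-factoriality of $\M_{\theta_K}(\Pi,v)$ is not ``general structure theory''---for non-generic parameters it can fail, and here it holds only because $\dim(\sigma_K)$ equals the dimension of the minimal face of $F$ containing it (Corollary~\ref{cor:PicdimC}\two), which the paper verifies by exhibiting a chain of divisorial contractions from $C_K$ (Proposition~\ref{prop:C_Kproperties}\three). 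Second, the heart of your plan---a tilting bundle on the \emph{singular} surface $S_K$ whose endomorphism algebra is a cornered preprojective algebra, together with the equivalence between $\theta_K$-semistability and arising from an ideal sheaf---is, as you yourself note, where the real work lies, but no argument is offered there; even granting the strategy could be carried out (it is close in spirit to \cite{Kuznetsov07} and \cite{CGS21}), as written the proposal does not constitute a proof.
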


Two special cases of this result have appeared in the literature before. When $K$ is the emptyset, then $S_K$ is the minimal resolution $S$, the GIT cone $\sigma_K$ is of full dimension in $\Theta_v$, and the description of $\Hilb^n(S)$ as a nonsingular quiver variety was given by Kuznetsov~\cite[Theorem~43]{Kuznetsov07}, and also independently by both M.~Haiman and H.~Nakajima.  At the other extreme, when $K=I$, then $S_K$ is the Kleinian singularity $\mathbb{A}^2/\Gamma$, the GIT cone $\sigma_K$ is of dimension one, and the result was established by Craw--Yamagishi~\cite[Theorem~1.4]{CY26} (compare also \cite{CGS21}).

For each subset $K\subseteq I$, the GIT cone $\sigma_K$ from Theorem~\ref{thm:mainnew} lies in the simplicial cone
  \begin{equation}
     \label{eqn:Fintro}
      F:= \big\{ \theta\in \Theta_v \mid \theta(\delta)\geq 0, \;\theta(\rho_i)\geq 0 \text{ for }1\leq i\leq r\big\}
      \end{equation}
 with supporting hyperplanes $\delta^\perp\coloneqq \{\theta\in \Theta_v \mid \theta(\delta)=0\}$ and $\rho_i^\perp\coloneqq \{\theta\in \Theta_v \mid \theta(\rho_i)=0\}$ ($1\leq i\leq r$). When $n>1$, the main result from \cite[Theorem~1.2]{BC20} identifies the GIT chamber decomposition of $F$ with the Mori chamber decomposition of the movable cone of any projective, crepant resolution of $\Sym^n(\mathbb{A}^2/\Gamma)$. Theorem~\ref{thm:mainnew} shows that $\sigma_K$ is identified with the nef cone of $\Hilb^n(S_K)$.

\begin{example}
For $n=3$ and $\Gamma$ of type $A_2$, Figure~\ref{A2chamberdecompwithcones} illustrates the height-one slice of the GIT chamber decomposition of $F$.
\begin{figure} [!htb]
        \centering
 \begin{tikzpicture}[baseline={(0,-0.3)},xscale=1.25,yscale=1]
			\tikzset{>=latex}
    \draw (0,4) -- (4,4);
    \draw (2,0) -- (0,4);
     \draw (2,0) -- (4,4);
     \draw (4,4) -- (1,2);
     \draw (0,4) --(3,2);
     \draw (1,2) -- (3,2);
     \draw (4,4) -- (2/3,8/3);
     \draw (0,4) -- (10/3, 8/3);
     \draw (2/3, 8/3)-- (10/3,8/3);
   \node at (2,4.3) {$\delta^\perp$};
  \node at (0.6,2) {$\rho_1^\perp$};
 \node at (3.4,2) {$\rho_2^\perp$};
   \draw[color=red,line width = 0.75mm] (0,4) -- (2/3, 8/3);
   \draw[color=red,line width = 0.75mm] (4,4) -- (10/3, 8/3);
\node at (0,3.2){$\sigma_{\{1\}}$};
\node at (4.1, 3.2){$\sigma_{\{2\}}$};
\filldraw[red] (2,0) circle (2pt) node[black, anchor=north]{$\sigma_{\{1,2\}}$};
\filldraw [red] (0,4) -- (4,4) -- (2,3.21)-- (0,4);
\node at (2, 3.6){$\sigma_{\varnothing}$};
\end{tikzpicture}
        \caption{For $n=3$ and $\Gamma$ of type $A_2$: decomposition of $F\cong \Mov(\Hilb^3(S))$ into GIT cones, with the cones $\sigma_K$ defining $\Hilb^3(S_K)$ for $K\subseteq \{1,2\}$ coloured red.}
        \label{A2chamberdecompwithcones}
    \end{figure}
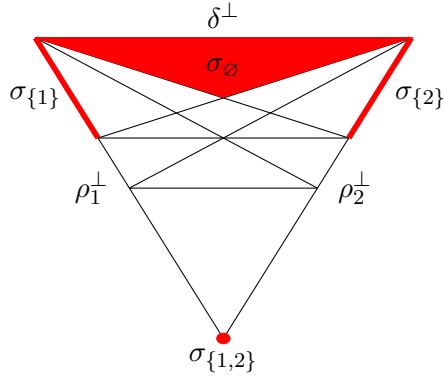 
 The top-dimensional cone $\sigma_{\varnothing}$ highlighted in Figure~\ref{A2chamberdecompwithcones} determines $\Hilb^3(S)$ as in \cite{Kuznetsov07} (compare \cite[Example~2.1(1)]{BC20}), the extremal ray $\sigma_{\{1,2\}}=\rho_1^\perp\cap \rho_2^\perp$ highlighted as a dot at the bottom of Figure~\ref{A2chamberdecompwithcones} determines $\Hilb^3(\mathbb{A}^2/\Gamma)$ as in \cite{CY26}, while the cones $\sigma_{\{k\}}$ for $k=1, 2$ highlighted in the top-left and top-right of Figure~\ref{A2chamberdecompwithcones} determine $\Hilb^3(S_{\{k\}})$, where $S_{\{k\}}$ is obtained from $S$ by contracting the $(-2)$-curve indexed by node $k$ of the Dynkin diagram of type $A_2$. 
  \end{example}

We now explain the proof of Theorem~\ref{thm:mainnew} for $n>1$. For each subset $K\subseteq I$, the intersection of $\sigma_K$ with the supporting hyperplane $\delta^\perp$ of $F$ is a GIT cone $\sigma_K^\prime\subseteq \sigma_K$ of codimension-one; for $K=I$, the cone $\sigma_I'$ is the origin in $\Theta_v$.
The catalyst for Theorem~\ref{thm:mainnew} is the observation that $\Sym^n(S_K)$ is isomorphic to the quiver variety $\mathfrak{M}_{\theta_K'}(\Pi,v)$ for any stability condition $\theta_K'$ in the relative interior of $\sigma_K'$ (see Proposition~\ref{prop:SymtoSym}). It follows that variation of GIT quotient (VGIT) defines a crepant partial resolution $\Sym^n(S_K)\to \Sym^n(\mathbb{A}^2/\Gamma)$. The composition of this morphism with the Hilbert--Chow morphism $\Hilb^n(S_K)\rightarrow \Sym^n(S_K)$ defines a crepant partial resolution 
   \[
\kappa\colon \Hilb^n(S_K)\longrightarrow \Sym^n(\mathbb{A}^2/\Gamma).
  \]
 We may now apply the main result of Bellamy--Craw~\cite[Theorem~1.2]{BC20} to obtain a GIT cone $\tau_K$ in $F$ such that $\Hilb^n(S_K)$ is isomorphic to the Nakajima quiver variety $\mathfrak{M}_{\theta}(\Pi,v)$ for any $\theta$ in the relative interior of $\tau_K$. Moreover, since $\kappa$ factors via the VGIT morphism from $\Sym^n(S_K)$ described above, it follows that the cone $\tau_K$ must contain $\sigma_K'$. In fact, analysing the class group of the Hilbert scheme of $n$ points on $S_K$ allows us to show that both $\sigma_K$ and $\tau_K$ are GIT cones in $F$ containing $\sigma_K'$ as a codimension-one face.
 The main statement of Theorem~\ref{thm:mainnew} follows once we show that 
 the cone $\sigma_K$ satisfies a collection of combinatorial properties that characterise  $\tau_K$ uniquely, giving $\sigma_K=\tau_K$. Our construction also produces a unique GIT chamber in $F$ containing $\sigma_K$ in its closure, from which it follows that $\Hilb^n(S_K)$ admits a unique projective crepant resolution obtained by variation of GIT quotient for quiver varieties. Forthcoming joint work of the second author with Bertsch~\cite{BW26}, which builds on the results we present here, provides a moduli space description of this projective crepant resolution phrased directly in terms of sheaves on the surface $S_K$.

 Theorem~\ref{thm:mainnew} allows us to describe explicitly the nef and movable cones of $\Hilb^n(S_K)$. To state the result, consider the GIT chamber $C_K$ (see Lemma~\ref{lem:C_K'}) where for $\theta\in C_K$, the VGIT morphism 
 \begin{equation}
     \label{eqn:crepantres}
 h\colon \mathfrak{M}_\theta\longrightarrow  \mathfrak{M}_{\theta_K}\cong \Hilb^n(S_K)
 \end{equation}
 is the unique projective crepant resolution. Write $\mathcal{R}_i$ for $i\in Q_0$ to denote the tautological bundles on $\mathfrak{M}_\theta$, and define the vector $\delta_J\coloneqq (\dim \rho_j)\in \mathbb{N}^J\coloneqq \bigoplus_{j\in J} \NN \rho_j$.

\begin{theorem}\label{thm:NefHilb}
 For $J\coloneqq \{0,1,\dots, r\}\setminus K$, pullback along the crepant resolution \eqref{eqn:crepantres} identifies:
 \begin{enumerate}
     \item[\one] the rational Picard group of $\Hilb^n(S_K)$ with the $\QQ$-span of $\{\det(\mathcal{R}_j) \mid j\in J\}$ in $\Pic(\mathfrak{M}_\theta)_\QQ;$ 
     \item[\two] the nef cone  of $\Hilb^n(S_K)$ with the cone 
 \[
   \Big\{\textstyle{\bigotimes_{j\in J} \det(\mathcal{R}_j)^{\otimes \eta_j}} \mid \eta(\rho_j) \geq (n-1) \eta(\delta_J)\geq 0 \text{ for }j\in J\setminus \{0\}\Big\};
    \]  
    \item[\three] the movable cone of $\Hilb^n(S_K)$ with the decomposition of the simplicial cone 
   \[
         \Big\{\textstyle{\bigotimes_{j\in J} \det(\mathcal{R}_j)^{\otimes \eta_j}} \mid 
     \eta(\rho_j) \geq 0 
     \text{ for }j\neq 0, \; 
     \eta(\delta_J)\geq 0\Big\}
    \]
    determined by the hyperplanes $\{(m\delta_{J}- \alpha)^\perp \mid \alpha\in \Phi_{I\setminus K}^+, 0\leq m < n\}$, where $\Phi_{I\setminus K}^+$ is the set of positive roots of the root system of type ADE associated to $\Gamma$ that are supported in $I\setminus K$.
 \end{enumerate} 
\end{theorem}

\medskip

\noindent \textbf{Acknowledgements:} The first author was supported by Research Project Grant RPG-2021-149 from The Leverhulme Trust. The second author was supported fully by a PhD studentship awarded by the Heilbronn Institute for Mathematical Research, Grant number EP/V521917/1, from UKRI.

\section{On Nakajima quiver varieties for Kleinian singularities}
\label{section:NQV}
In this section, we provide some background on Nakajima quiver varieties, and we generalise a couple of results from work of the first author with Bellamy~\cite{BC20} and with Bellamy--Schedler~\cite{BCS26}.

\subsection{Nakajima quiver varieties}
Let $\Gamma\subset \SL(2,\kk)$ be a nontrivial finite subgroup. List the irreducible representations of $\Gamma$ as $\rho_0, \rho_1, \dots, \rho_r$ where $\rho_0$ is trivial. The McKay graph of $\Gamma$ is defined to have nodes set $\{0,1,\dots, r\}$, and $\dim_\Gamma \Hom(\rho_i,\rho_j\otimes V)$ edges joining vertex $i$ to vertex $j$, where $V$ is the given $2$-dimensional representation of $\Gamma$. Consider the symmetric matrix $C_\Gamma:= 2\operatorname{Id}-A_\Gamma$, where $A_\Gamma$ is the adjacency matrix of this graph. McKay~\cite{McKay80} showed that by equipping $R(\Gamma):=\bigoplus_{0\leq i\leq r} \ZZ \rho_i$ with the symmetric bilinear form determined by $C_\Gamma$, we obtain the root lattice of an affine root system of type ADE, where the Dynkin diagram is the McKay graph, the simple roots are the irreducible representations of $\Gamma$, and the minimal imaginary root $\delta=(\delta_i)\in R(\Gamma)$ satisfies $\delta_i=\dim \rho_i$ for $0\leq i\leq r$. The integer span of the nontrivial irreducible representations defines $\Phi$, the corresponding root system of finite type. Write $\Phi^+$ for the set of positive roots.

For the main quiver of interest, augment the McKay graph by adding a framing vertex $\infty$ and an edge joining $\infty$ to $0$.  The \emph{framed McKay quiver} $Q$ is defined to have vertex set $Q_0=\{\infty,0,1,\dots, r\}$, and arrows set $Q_1$ given by the set of pairs comprising an edge in the framed McKay graph and an orientation of the edge. Thus, $Q$ is obtained from the framed McKay graph by replacing each edge by two arrows, one in each direction. Write $\head, \tail\colon Q_1\to Q_0$ for the maps assigning to each arrow the head and tail vertex respectively of the corresponding oriented edge. For each $a\in Q_1$, write $a^*$ for the arrow corresponding to the same edge with opposite orientation. Let $\epsilon\colon Q_1\to \{\pm 1\}$ be any map satisfying $\epsilon(a)\neq \epsilon(a^*)$ for each $a\in Q_1$. The preprojective algebra $\Pi$ is the noncommutative algebra obtained as the quotient of the path algebra $\kk Q$ by the two-sided ideal of relations 
\[
 \left(\sum_{\{a\in Q_1 \mid \head(a)=i\}} \epsilon(a)aa^* \mid i\in Q_0\right).
 \]
The \emph{McKay quiver} $Q_\Gamma$ is the complete subquiver of $Q$ on the vertex set $\{0,1,\dots, r\}$.  The preprojective algebra $\Pi_\Gamma$ for $Q_\Gamma$ is isomorphic to $\Pi/(e_\infty)$, where $e_\infty$ is the vertex idempotent corresponding to the trivial path at $\infty$. 

 Let $\rho_\infty$ be a formal symbol and write $\ZZ^{Q_0}=\ZZ\rho_\infty\oplus R(\Gamma)$ for the space of dimension vectors. We write elements of $\ZZ^{Q_0}$ in the form $\alpha=\sum_{i\in Q_0} \alpha_i\rho_i$, where $\alpha_i\in \ZZ$ for $i\in Q_0$, or alternatively, in the form $\alpha=(\alpha_\infty, \sum_{0\leq i\leq r} \alpha_i\rho_i)$ to emphasise the component at the framing vertex. Fix a natural number $n \geq 1$, and define the vector
\[
  v:=(1,n\delta)=\rho_\infty+n\sum_{0\leq i\leq r}\dim(\rho_i)\rho_i\in\ZZ^{Q_0}.
\]
 
 To construct the quiver varieties of interest, let $\Rep(Q,v):=\bigoplus_{a\in Q_1} \Hom(\kk^{v_{\tail(a)}},\kk^{v_{\head(a)}})$ denote the vector space of representations of the framed McKay quiver that have dimension vector $v$. Write $\Rep(\Pi,v)$ for the closed subscheme of $\Rep(Q,v)$ parametrising those representations that satisfy the preprojective relations, or equivalently, $\Pi$-modules of dimension vector $v$. The reductive group $G(v):= \GL(1)\times \prod_{0\leq i\leq r} \GL(n\dim\rho_i)$ acts on $\Rep(Q,v)$ by conjugation, leaving the affine subscheme $\Rep(\Pi,v)$ invariant. The one-dimensional diagonal scalar subgroup $\kk^\times\subset G(v)$ acts trivially. It follows that, if we define the rational vector space 
 \[
 \Theta_v:= \big\{\theta\in \Hom_\ZZ(\ZZ^{Q_0},\QQ) \mid \theta(v)=0\big\},
 \]
  then every character of $G(v)/\kk^\times$ is of the form $\chi_\theta\colon G(v)/\kk^\times\to \CC^\times$ for some integer-valued $\theta\in \Theta_v$, where $\chi_\theta(g) = \prod_{i \in Q_0} \det(g_i)^{\theta_i}$ for $g=(g_i)\in G(v)/\kk^\times$. For $\theta\in \Theta_v$, after replacing $\theta$ by a positive multiple if necessary, we consider the categorical quotient 
  \[
  \mathfrak{M}_\theta(\Pi,v):= \Rep(\Pi,v)\git_{\chi_\theta} G(v) = \Proj \bigoplus_{k\geq 0} \kk[\Rep(\Pi,v)]^{k\theta},
 \]
 where $\kk[\Rep(\Pi,v)]^{\theta}$ is the vector space of $\chi_{\theta}$-semi-invariant functions. The scheme $\mathfrak{M}_\theta(\Pi,v)$ is reduced and irreducible by Gan--Ginzburg~\cite[Theorem~1.6]{GanGinzburg06}  and Bellamy--Schedler~\cite[Theorem~1.2]{BS21} respectively, so we refer to $\mathfrak{M}_\theta(\Pi,v)$ as the \emph{Nakajima quiver variety} associated to $\theta$. By applying the isomorphism from \cite[Lemma~4.5]{BC20}, we see that variation of GIT quotient (VGIT) induces a projective morphism from $\mathfrak{M}_\theta(\Pi,v)$ to the affine quotient 
  \[
  \mathfrak{M}_0(\Pi,v) \cong \Sym^n(\mathbb{A}^2/\Gamma).
  \]

   For $\theta\in \Theta_v$, a $\Pi$-module $B$ of dimension vector $v$ is \emph{$\theta$-semistable} (resp.\ \emph{$\theta$-stable}) if, for any non-zero proper $\Pi$-submodule $B'\subset B$, we have $\theta(B')\geq 0$ (resp.\ $\theta(B')>0$). We say that the stability condition $\theta$ is \emph{generic} if every $\theta$-semistable $\Pi$ module of dimension $v$ is $\theta$-stable. For a fixed $\theta \in \Theta_v$, two $\theta$-semistable $\Pi$-modules $B, B'$ are said to be \emph{$\Seshadri$-equivalent} if they admit filtrations
  \[
  0=B_0 \subset B_1\subset \cdots B_\ell=B\quad \text{and}\quad 0=B'_0 \subset B'_1\subset \cdots B'_{\ell'}=B'
  \]
  such that each $B_i$ and $B_i'$ is $\theta$-semistable, and $\bigoplus_{1\leq i\leq \ell} B_i/B_{i-1}\cong \bigoplus_{1\leq i\leq \ell'} B'_i/B'_{i-1}$. The \emph{$\theta$-polystable module} in each $\Seshadri$-equivalence class is the unique (up to isomorphism) representative that is a direct sum of $\theta$-stable modules. Following King~\cite{King94}, the GIT quotient $ \mathfrak{M}_\theta(\Pi,v)$ may be regarded as the coarse moduli space of $\theta$-semistable $\Pi$-modules of dimension vector $v$, up to $\Seshadri$-equivalence. 
 
 \subsection{Birational geometry}
 For $\gamma\in \ZZ\oplus R(\Gamma)$, write $\gamma^\perp=\{\theta\in \Theta_v \mid \theta(\gamma)=0\}$ for the dual hyperplane in $\Theta_v$. It was shown by Bellamy--Craw~\cite[Theorem~1.1]{BC20} that the hyperplane arrangement
  \begin{equation}
  \label{eqn:mathcalA}
  \mathcal{A}:=\big\{\delta^\perp, (m\delta\pm \alpha)^\perp \mid \alpha\in \Phi^+, 0\leq m < n\big\}
  \end{equation}
  determines the GIT fan in $\Theta_v$ for the action of $G(v)$ on $\Rep(\Pi,v)$, where parameters $\theta, \theta'\in \Theta_v$ lie in the relative interior of the same closed GIT cone $\sigma$ if and only if the notions of $\theta$-semistability and $\theta'$-semistability coincide, in which case $\mathfrak{M}_\theta(\Pi,v)\cong \mathfrak{M}_{\theta'}(\Pi,v)$. The \emph{GIT chambers} are the interiors of the top-dimensional cones in this fan, and $\theta$ is generic if and only if it lies in a GIT chamber, in which case $\M_\theta(\Pi,v)$ is nonsingular. It was shown in \cite[Proposition~2.2]{BC20}, that the simplicial cone
  \begin{equation}
     \label{eqn:F}
      F:= \big\{ \theta\in \Theta_v \mid \theta(\delta)\geq 0, \;\theta(\rho_i)\geq 0 \text{ for }1\leq i\leq r\big\}
      \end{equation}
  is a fundamental domain for the action of the Namikawa--Weyl group. We note that $F$ is a union of the closures of GIT chambers, and that the only hyperplanes in $\mathcal{A}$ that intersect the interior of $F$ are those of the form $(m\delta-\alpha)^\perp$ for $\alpha\in \Phi^+$ and $0\leq m< n$.

  Let $\theta\in \Theta_v$ be generic, and let $C$ be the GIT chamber containing $\theta$. Since $v$ is indivisible, \cite[Proposition~5.3]{King94} shows that $\mathfrak{M}_\theta(\Pi,v)$ is the fine moduli space of $\theta$-stable $\Pi$-modules of dimension $v$, up to isomorphism.  In the case, $\mathfrak{M}_\theta(\Pi,v)$ carries a tautological locally-free sheaf $\mathcal{R}=\bigoplus_{i\in Q_0} \mathcal{R}_i$, where $\mathcal{R}_\infty\cong \mathcal{O}_{\mathfrak{M}_\theta(\Pi,v)}$ and $\rank(\mathcal{R}_i)=n\dim\rho_i$ for $0\leq i\leq r$. The map of rational vector spaces
  \begin{equation}
  \label{eqn:linearisation}
  L_C\colon \Theta_v \longrightarrow \Pic\big(\mathfrak{M}_\theta(\Pi,v)\big)\otimes_{\ZZ}\QQ
 \end{equation}
  defined by sending any integer-valued $\eta\in \Theta_v$ to the line bundle $L_C(\eta) =\bigotimes_{0\leq i\leq r} \det(\mathcal{R}_i)^{\otimes \eta_i}$ is the \emph{linearisation map} associated to the chamber $C$. Note that $\Pic(\mathfrak{M}_\theta(\Pi,v))$ coincides with the relative Picard group of $\mathfrak{M}_\theta(\Pi,v)$ over $\mathfrak{M}_0(\Pi,v)$ because $\Pic(\Sym^n(\mathbb{A}^2/\Gamma))$ is trivial~\cite[Theorem~3.6.1]{Benson93}.
 
 We require the following slightly stronger form of the main result from Bellamy--Craw~\cite{BC20}.

  \begin{theorem}
  \label{thm:BC20}
For $n>1$, let $\theta\in F$ be generic, and let $C$ denote the GIT chamber containing $\theta$. For the quiver variety $X:=\mathfrak{M}_\theta(\Pi,v)$, we have that: 
 \begin{enumerate}
     \item[\one] the map $L_C$ from \eqref{eqn:linearisation} is an isomorphism that identifies the GIT wall-and-chamber structure in $F$ with the decomposition of the movable cone of $X$ 
     into Mori chambers;
     \item[\two] for every $\eta\in F$, the quiver variety $\mathfrak{M}_\eta(\Pi,v)$ is the birational model of $X$ determined by the line bundle $L_C(\eta)$; and
     \item[\three] $X$ is a Mori Dream Space over $\Sym^n(\mathbb{A}^2/\Gamma)$.
      \end{enumerate}
 In particular, every projective crepant partial resolution of $\Sym^n(\mathbb{A}^2/\Gamma)$ is isomorphic to a quiver variety $\mathfrak{M}_\eta(\Pi,v)$ for some $\eta\in F$.
     \end{theorem}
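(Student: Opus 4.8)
The plan is to deduce the three numbered assertions from \cite[Theorem~1.2]{BC20} by repackaging its content through the explicit linearisation map $L_C$, and then to upgrade the concluding statement from crepant resolutions to crepant \emph{partial} resolutions using the fundamental domain property of $F$. The essential preliminary observation, valid because $v$ is indivisible and $\theta$ is generic, is that King~\cite[Proposition~5.3]{King94} realises $X=\M_\theta(\Pi,v)$ as the fine moduli space of $\theta$-stable $\Pi$-modules of dimension vector $v$. This is what makes the tautological bundle $\Rr=\bigoplus_{i\in Q_0}\Rr_i$, and hence the map $L_C$ of \eqref{eqn:linearisation}, genuinely defined on $X$.

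For part \one, I would check that $L_C$ is an isomorphism. Surjectivity holds because the determinants $\det(\Rr_i)$ of the tautological bundles generate $\Pic(X)\otimes\QQ$, which by triviality of $\Pic(\Sym^n(\mathbb{A}^2/\Gamma))$ coincides with the relative Picard group of $X$ over the affine base; injectivity holds because the corresponding classes, dual to the exceptional prime divisors contracted by $X\to\M_0(\Pi,v)$, are linearly independent. The identification of the GIT wall-and-chamber structure in $F$ with the Mori chamber decomposition of $\Mov(X)$ is then exactly \cite[Theorem~1.2]{BC20} transported along $L_C$, so that the hyperplanes of $\mathcal{A}$ meeting $\interior(F)$ are carried to the walls of $\Mov(X)$.

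Part \two is the main new input, and rests on the standard GIT dictionary between semi-invariants and sections. Because $\theta$ is generic, the quotient of the $\theta$-semistable locus in $\Rep(\Pi,v)$ by $G(v)$ is geometric, and for each integer-valued $\eta\in\Theta_v$ the space of $\chi_{k\eta}$-semi-invariants is identified with $H^0(X,L_C(k\eta))$; as $X\to\M_0(\Pi,v)$ is projective over an affine base this yields $\kk[\Rep(\Pi,v)]^{k\eta}\cong H^0(X,L_C(k\eta))$ for every $k\geq 0$. Taking $\Proj$ of this section ring returns $\M_\eta(\Pi,v)$ by definition, and for $\eta\in F$ the class $L_C(\eta)$ is movable, so the result is precisely the birational model of $X$ determined by $L_C(\eta)$; this is the parameter-varying refinement of \cite[Lemma~4.5]{BC20}. \textbf{The step I expect to be the main obstacle} is controlling this identification when $\eta$ lies on a wall or in a lower-dimensional cone of the GIT fan, exactly the regime that produces partial rather than full resolutions, since there $L_C(\eta)$ is movable but not ample and one must verify that no global sections are lost. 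I would handle this by invoking that all the generic quiver varieties indexed by $F$ are isomorphic in codimension one, being related by flops over $\M_0(\Pi,v)$, so that the spaces $H^0(X,L_C(k\eta))$ are preserved under these birational maps and the $\Proj$ is insensitive to the choice of generic model.

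Finally, for part \three, $\QQ$-factoriality of $X$ together with \one shows that $\Mov(X)\cong F$ is a rational polyhedral cone cut by the finite arrangement $\mathcal{A}$ into finitely many Mori chambers, each of whose associated birational models is projective over $\Sym^n(\mathbb{A}^2/\Gamma)$ by \two; since the relevant Cox ring is assembled from the finitely generated semi-invariant algebras, this is exactly the assertion that $X$ is a Mori Dream Space over $\Sym^n(\mathbb{A}^2/\Gamma)$. For the closing claim, let $Y\to\Sym^n(\mathbb{A}^2/\Gamma)$ be any projective crepant partial resolution; then $Y$ is a normal, $\QQ$-factorial birational model of $X$ over the affine base, hence is determined by a (possibly lower-dimensional) cone of the Mori chamber decomposition of $\Mov(X)$ up to the action of the Namikawa--Weyl group. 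Because $F$ is a fundamental domain for this group by \cite[Proposition~2.2]{BC20}, I may assume the corresponding cone meets $F$, and then parts \one and \two identify $Y$ with $\M_\eta(\Pi,v)$ for any $\eta$ in the relative interior of that GIT cone.
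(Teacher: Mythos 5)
Your overall skeleton matches the paper's proof: parts \one\ and \three, together with part \two\ for generic $\eta$, are quoted from \cite[Theorem~1.2, Corollary~6.5]{BC20}, and the only new content is part \two\ for non-generic $\eta$, which is ultimately settled by transferring section rings along the small (flop) birational maps between the generic models — that is the paper's final step, via the compatibility $L_C(\eta)\cong\phi^*(L_{C'}(\eta))$ of \cite[(6.5)]{BC20}. However, your central claim, the identification $\kk[\Rep(\Pi,v)]^{k\eta}\cong H^0(X,L_C(k\eta))$ for \emph{arbitrary} integer-valued $\eta$, is a genuine gap. Descent of semi-invariants to sections is standard, but surjectivity requires extending a semi-invariant function from the $\theta$-semistable locus across the unstable locus, which needs both a codimension-two bound on the unstable locus and normality (or at least $S_2$) of $\Rep(\Pi,v)$; the justification you offer (``$X\to\M_0(\Pi,v)$ is projective over an affine base'') is a non sequitur for this. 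Worse, normality of $\Rep(\Pi,v)$ is exactly what is unavailable in this setting: as the paper notes immediately after the theorem, $v\notin\Sigma_0$, which is why the general result \cite[Theorem~1.2]{BCS22} cannot be invoked and why the paper never argues through semi-invariant rings directly.

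Your proposed repair of the wall case does not close this gap: flop-invariance only identifies $H^0(X,L_C(k\eta))$ with $H^0(X',L_{C'}(k\eta))$ for another \emph{generic} model $X'$; it never connects either section ring to the GIT quotient $\M_\eta(\Pi,v)$ itself, which was the whole point. The missing ingredient is the paper's use of \cite[(6.3)]{BC20}: choose a chamber $C'\subseteq F$ with $\eta\in\overline{C'}$, so that the VGIT morphism $g\colon \M_{\theta'}(\Pi,v)\to\M_\eta(\Pi,v)$ exists and satisfies $L_{C'}(\eta)=g^*(\mathcal{O}_{\M_\eta(\Pi,v)}(1))$ after rescaling $\eta$; combined with semiampleness of $L_{C'}(\eta)$ \cite[Corollary~6.5\one]{BC20} and Zariski's theorem (which also supplies the finite generation of the section ring that you need but never address), this gives $\Proj R(X',L_{C'}(\eta))\cong\M_\eta(\Pi,v)$, and only then does the flop argument transfer the conclusion to the given chamber $C$. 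Separately, in your final paragraph the assertion that an arbitrary projective crepant partial resolution $Y$ is $\QQ$-factorial is false in general (Corollary~\ref{cor:PicdimC}\two\ of the paper characterises exactly when $\M_\eta(\Pi,v)$ is $\QQ$-factorial, and it can fail); the correct route is to dominate $Y$ by a projective crepant resolution and pull back an ample bundle from $Y$, then apply parts \one\ and \two. That slip is repairable, but the reliance on the unproved semi-invariants-equal-sections identification is not.
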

      
 \begin{proof}
 In light of \cite[Theorem~1.2, Corollary~6.5]{BC20}, it remains to show that part \two\ holds when $\eta$ is not generic\footnote{An alternative approach is to deduce that \cite[Condition~3.3]{BCS26} holds in our setting by applying various results from \cite[Sections~5-6]{BC20}, then applying \cite[Lemma~3.9(ii)]{BCS26} in combination with Zariski's Main theorem and the fact that each quiver variety $\mathfrak{M}_\theta(\Pi,v)$ is normal by  both \cite{GanGinzburg06} and \cite{BS21}. For convenience, we provide a more direct proof here.} Let $C'$ be a GIT chamber in $F$ with $\eta\in \overline{C'}$, and let $\theta'\in C'$. Then $L_{C'}(\eta)$ is semiample on $X':=\mathfrak{M}_{\theta'}(\Pi,v)$ by \cite[Corollary~6.5\one]{BC20}. If we write  $f^\prime \colon \mathfrak{M}_{\theta'}(\Pi,v)\to \Sym^n(\mathbb{A}^2/\Gamma)$ for the structure morphism, then the section ring $R(X', L_{C'}(\eta))=\bigoplus_{m\geq 0} (f^\prime)_*L_{C'}(\eta)^{\otimes m}$ is therefore finitely generated by a theorem of Zariski~\cite[Example 2.1.30]{LazarsfeldI}, so the birational model $\Proj R(X', L_{C'}(\eta))$ is well-defined.  Since $\eta$ lies in a face of $C'$, \cite[(6.3)]{BC20} shows that after replacing $\eta$ by some positive multiple if necessary, the induced morphism $g\colon \mathfrak{M}_{\theta'}(\Pi,v)\to \mathfrak{M}_{\eta}(\Pi,v)$ satisfies 
 \[
 L_{C'}(\eta) = g^*(\mathcal{O}_{\mathfrak{M}_{\eta}(\Pi,v)}(1)).
 \]
 The morphim $f_\eta\colon \mathfrak{M}_{\eta}(\Pi,v)\to \Sym^n(\mathbb{A}^2/\Gamma)$ obtained by variation of GIT quotient satisfies $f^\prime=f_\eta\circ g$. Since $g_*(\mathcal{O}_{\mathfrak{M}_{\theta'}(\Pi,v)})\cong \mathcal{O}_{\mathfrak{M}_\eta(\Pi,v)}$, we have for each $m\geq 0$ that
 \[
 (f^\prime)_*L_{C'}(\eta)^m\cong 
 (f_\eta)_*\big(g_*(g^*(\mathcal{O}_{\mathfrak{M}_{\eta}(\Pi,v)}(m))\big)
 \cong
 (f_\eta)_*(\mathcal{O}(m)\otimes g_*(\mathcal{O}_{\mathfrak{M}_{\theta'}(\Pi,v)})
 \cong 
  (f_\eta)_*\mathcal{O}(m)
 \]
 on $\Sym^n(\mathbb{A}^2/\Gamma)$, so the section ring $R(X^\prime, L_{C^\prime}(\eta))$ is isomorphic to the section ring of the polarising ample bundle $\mathcal{O}(1)$ on  $\mathfrak{M}_{\eta}(\Pi,v)$. It follows that
 $\Proj R\big(X',L_{C'}(\eta)\big)\cong \mathfrak{M}_{\eta}(\Pi,v)$. Since $C$ and $C'$ lie in $F$, compatibility of the linearisation maps $L_{C}$ and $L_{C'}$ from \cite[(6.5)]{BC20} gives $L_C(\eta)\cong \phi^*(L_{C'}(\eta))$, where $\phi\colon \mathfrak{M}_\theta(\Pi,v)\dashrightarrow \mathfrak{M}_{\theta'}(\Pi,v)$ is the birational map given by VGIT. Therefore, $\phi^*$ identifies the section rings of $L_C(\eta)$ and $L_{C'}(\eta)$, giving $\Proj R(X,L_{C}(\eta)) \cong \Proj R(X',L_{C'}(\eta))\cong \mathfrak{M}_{\eta}(\Pi,v)$.
 \end{proof}

 We now record two results that mirror and extend slightly those from \cite[Corollary~3.15\three-\four]{BCS26}.

 \begin{corollary}
 \label{cor:PicdimC}
 Let $n>1$. For $\eta\in F$, let $F'$ be the unique face of $F$ satisfying $\eta\in \relint(F')$, and let $\sigma$ be the unique GIT cone satisfying $\eta\in \relint(\sigma)$. Then 
    \begin{enumerate}
        \item[\one] the rank of $\Pic(\mathfrak{M}_\eta(\Pi,v))$ equals $\dim(\sigma)$; and    \item[\two] the variety $\mathfrak{M}_\eta(\Pi,v)$ is $\QQ$-factorial if and only if $\dim(\sigma)=\dim(F')$. 
    \end{enumerate} 
 \end{corollary}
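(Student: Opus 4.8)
The plan is to realise both invariants of $Y:=\M_\eta(\Pi,v)$ through a single birational contraction from a smooth model, reading off the answers from the face structure of $F$ provided by Theorem~\ref{thm:BC20}. Since $\eta\in F$ and $F$ is a union of closures of GIT chambers, I first choose a chamber $C$ with $\sigma\subseteq\overline{C}$ (so that $\sigma$ is a face of $\overline{C}$) and a generic $\theta'\in C$, and set $X':=\M_{\theta'}(\Pi,v)$, which is smooth, and $B:=\Sym^n(\mathbb{A}^2/\Gamma)$. By the proof of Theorem~\ref{thm:BC20}\two\ there is a projective birational morphism $g\colon X'\to Y$ over $B$ with $L_C(\eta)=g^*\Oo_Y(1)$ up to a positive multiple. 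As $Y$ is normal and $g$ is proper birational, $g_*\Oo_{X'}=\Oo_Y$, so the projection formula makes $g^*\colon\Pic(Y)\otimes\QQ\to\Pic(X')\otimes\QQ$ injective. As $\eta'$ ranges over $\relint(\sigma)$ the variety $\M_{\eta'}$ is unchanged and $L_C(\eta')$ pulls back an ample class on $Y$, so $g^*$ identifies $\Nef(Y/B)$ with the face $\sigma$ of $\Nef(X'/B)=\overline{C}$; hence $g^*\big(\Pic(Y)\otimes\QQ\big)=\Span_\QQ(\sigma)$. Since $L_C$ is an isomorphism by Theorem~\ref{thm:BC20}\one, part \one\ follows at once: $\rank\Pic(Y)=\dim\Span(\sigma)=\dim(\sigma)$.

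For part \two, recall that for the normal variety $Y$ the natural inclusion $\Pic(Y)\otimes\QQ\hookrightarrow\Class(Y)\otimes\QQ$ is an isomorphism precisely when $Y$ is $\QQ$-factorial, so $Y$ is $\QQ$-factorial if and only if $\rank\Pic(Y)=\rank\Class(Y)$. By part \one\ the left-hand side is $\dim(\sigma)$, so it remains to prove $\rank\Class(Y)=\dim(F')$, after which the equivalence with $\dim(\sigma)=\dim(F')$ is immediate (note $\dim(\sigma)\le\dim(F')$ always, since $\sigma\subseteq F'$). To compute the class group I again use $g$. Here $X'$ is smooth with $\Class(X')=\Pic(X')$ of rank $r+1$, while $\Class(B)\otimes\QQ=0$ because $B$ is a finite quotient singularity and hence $\QQ$-factorial; moreover, under the dictionary of Theorem~\ref{thm:BC20}\one\ the $r+1$ prime divisors of $X'$ exceptional over $B$ are in bijection with the $r+1$ facets of the simplicial cone $F=\Mov(X'/B)$ and form a $\QQ$-basis of $\Pic(X')\otimes\QQ\cong\Theta_v$. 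The excision sequence for $g$ then reads
\[
\bigoplus_{E\ g\text{-exceptional}}\QQ\cdot[E]\longrightarrow \Pic(X')\otimes\QQ\longrightarrow \Class(Y)\otimes\QQ\longrightarrow 0,
\]
and since the $g$-exceptional prime divisors are a subset of the above basis they are linearly independent, giving $\rank\Class(Y)=(r+1)-m$, where $m$ denotes the number of $g$-exceptional prime divisors.

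It therefore suffices to establish $m=\operatorname{codim}_F(F')=(r+1)-\dim(F')$, and this is the step I expect to be the main obstacle. I would prove it using the dictionary of Theorem~\ref{thm:BC20}\one, which identifies $F$ with $\Mov(X'/B)$ and the GIT cones with the Mori chambers: each of the $r+1$ facets $\delta^\perp\cap F$ and $\rho_i^\perp\cap F$ ($1\le i\le r$) is a divisorial-contraction wall of the movable cone along which exactly one exceptional prime divisor is contracted, and—mirroring the analysis of \cite[Corollary~3.15]{BCS22}—a prime divisor is $g$-contracted if and only if the corresponding facet contains $\eta$. The content to be checked here is precisely that crossing an interior wall of $F$ only flops curves (leaving the set of prime divisors unchanged), whereas reaching a facet of $F$ contracts a single divisor, so that the $g$-exceptional divisors depend only on the face $F'$ and not on the chosen $\eta\in\relint(F')$ or chamber $C$. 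Granting this, $\eta\in\relint(F')$ lies on exactly the facets whose intersection is $F'$, whence $m=\operatorname{codim}_F(F')$, so $\rank\Class(Y)=\dim(F')$ and $Y$ is $\QQ$-factorial if and only if $\dim(\sigma)=\dim(F')$.
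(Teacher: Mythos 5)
Your part \one\ is correct and is essentially the paper's own argument: both proofs identify $\Nef(\M_\eta(\Pi,v))$ with the face $L_C(\sigma)$ of the nef cone of a nonsingular model by pulling back along the VGIT morphism, and then equate dimensions. The only cosmetic difference is that you deduce injectivity of $g^*$ from $g_*\Oo_{X'}=\Oo_Y$ and the projection formula, where the paper cites \cite[Corollary~4.2]{Ohta20}.

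Part \two\ is where you genuinely diverge from the paper, and it is also where your proposal has a real gap --- one you flag yourself. Reducing $\QQ$-factoriality to $\operatorname{rk}\Pic=\operatorname{rk}\Class$ and writing the excision sequence for $g$ is fine, but the conclusion then rests entirely on the equality $m=\operatorname{codim}_F(F')$, which in turn rests on claims you only ``grant'': that the prime divisors of $X'$ exceptional over $B$ number exactly $r+1$ and are in bijection with the facets of $F$; that each facet is a divisorial wall along which \emph{exactly one} prime divisor is contracted (at least one would not suffice for your count); that interior walls of $F$ are crossed by flops; and that consequently the set of $g$-exceptional divisors depends only on $F'$. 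None of this follows from the statement of Theorem~\ref{thm:BC20}, which matches GIT cones with birational models but says nothing about which walls are divisorial, which are small, or how many prime divisors die at each facet. Nor can you import it from \cite[Corollary~3.15]{BCS22}: the paper points out explicitly that $v\notin\Sigma_0$ here, which is exactly why such statements must be reproved in this setting. The paper's proof of \two\ is organised so that no such counting (and no class group computation) is ever needed: for the ``if'' direction it picks a GIT cone $\sigma'$ with $\sigma\subseteq\sigma'\subseteq F'$ and $\dim\sigma'=\dim F'$, factors the VGIT morphism from a smooth model as $d=(r+1)-\dim(F')$ divisorial contractions, and invokes \cite[Corollary~3.18]{KM98} at each step to preserve $\QQ$-factoriality; for the ``only if'' direction it observes that when $\dim\sigma<\dim F'$ the morphism $\xi\colon\M_{\eta'}(\Pi,v)\to\M_\eta(\Pi,v)$ pulls $\Nef(\M_\eta(\Pi,v))$ back to a cone of less than top dimension in $\Mov(\M_{\eta'}(\Pi,v))$, so $\xi$ is a small contraction that is not an isomorphism, and the target of such a morphism from a $\QQ$-factorial variety is never $\QQ$-factorial. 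The only wall-crossing input the paper needs is that crossing a supporting hyperplane of $F$ is divisorial, which follows from the Mori Dream Space structure of Theorem~\ref{thm:BC20}\three\ (a facet of the movable cone cannot be a flopping wall); your route needs the strictly finer facet/divisor bijection with multiplicity one, and until that is proved your part \two\ is incomplete.
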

 \begin{proof}
  Let $\sigma'\subset F$ be any GIT cone satisfying $\sigma\subseteq \sigma'\subseteq F'$ and $\dim(\sigma')=\dim(F')$. In addition, let $C\subset F$ be a GIT chamber with $\sigma'\subseteq \overline{C}$. Let $\eta'\in\relint(\sigma')$ and $\theta\in C$, and write
    \[
    \mathfrak{M}_{\theta}(\Pi,v)
    \stackrel{\omega}{\longrightarrow}
    \mathfrak{M}_{\eta'}(\Pi,v)
    \stackrel{\xi}{\longrightarrow}
    \mathfrak{M}_{\eta}(\Pi,v)
    \]
    for the VGIT morphisms, where $\mathfrak{M}_{\theta}(\Pi,v)$ is nonsingular because $\theta$ is generic. The map $L_{C}$ identifies $\sigma$ with the face $L_{C}(\sigma)$ of $\Nef(\mathfrak{M}_{\theta}(\Pi,v))$. The birational model of $X:=\mathfrak{M}_\theta(\Pi,v)$ determined by $L_{C}(\eta)$ is $\mathfrak{M}_{\eta}(\Pi,v)$, so 
    \begin{equation}
    \label{eqn:NefconeFace}
    L_{C}(\sigma)=(\xi\circ \omega)^*\big(\Nef(\mathfrak{M}_{\eta}(\Pi,v))\big)
    \end{equation}
    by Theorem~\ref{thm:BC20}\two. The map $(\xi\circ \omega)^*$ is injective by \cite[Corollary~4.2]{Ohta20}, so it identifies $\Nef(\mathfrak{M}_\eta(\Pi,v))$ with $L_{C}(\sigma)$. Part \one\ follows by equating the dimensions of these cones.  For part \two, apply the same logic to $\sigma'$ to see that $L_{C}(\sigma')=\omega^*(\Nef(\mathfrak{M}_{\eta'}(\Pi,v)))$. Moreover, since $\sigma'$ is a top-dimensional cone in the face $F'$ of $F$, applying the same logic to each GIT cone in the face $F'$ of $F$ gives
    \begin{equation}
        \label{eqn:pullbackMov}
    L_{C}(F')=\omega^*\big(\Mov(\mathfrak{M}_{\eta'}(\Pi,v))\big).
    \end{equation}
    The face $F'$ is the intersection of $F$ with $d:=(r+1)-\dim(F')$ supporting hyperplanes of $F$, and since $\dim(\sigma')=\dim(F')$, we see that $\omega$ is the composition of a sequence of $d$ divisorial contractions, so $\mathfrak{M}_{\eta'}(\Pi,v)$ is $\QQ$-factorial. If $\dim(\sigma)=\dim(F')$, then we may choose $\sigma':=\sigma$,  in which case $\mathfrak{M}_{\eta}(\Pi,v)$ is $\QQ$-factorial. Conversely, if $\dim(\sigma)<\dim(F')$, then $\xi^*(\Nef(\mathfrak{M}_{\eta}(\Pi,v)))$ is not of top-dimension in $\Mov(\mathfrak{M}_{\eta'}(\Pi,v))$, so $\xi$ is a small contraction and $\mathfrak{M}_{\eta'}(\Pi,v)$ is not $\QQ$-factorial.
    \end{proof}

 \subsection{The surface case} 
 \label{sec:n=1}
    When $n=1$, the analogue of Theorem~\ref{thm:BC20} holds verbatim except for part \one, where $L_C$ is merely a surjective map from the GIT fan to the Mori decomposition of the movable cone with a one-dimensional kernel spanned by $(-1,1,0,\dots, 0)\in \Theta_v$ (see \cite[Proposition~7.11]{BC20}). In that case, the map $\mathcal{R}_\infty\to \mathcal{R}_0$ of tautological bundles on $\mathfrak{M}_\theta(\Pi,v)$ for $\theta\in C$ is an isomorphism; to put it another way, adding the framing vertex has introduced some redundancy.

 For a cleaner statement when $n=1$, we replace $Q$, $\Pi$, $v$, $G(v)$ and $\theta\in \Theta_v$ by the McKay quiver $Q_\Gamma$, the preprojective algebra $\Pi_\Gamma$, the vector $\delta$, the reductive group $G(\delta):=\prod_{0\leq i\leq r} \GL(\dim(\rho_i))$, and a stability condition $\zeta\in \Theta_\delta$ respectively; the analogous construction defines the McKay quiver moduli spaces $\mathfrak{M}_\zeta(\Pi_\Gamma,\delta)$. For a GIT chamber $C\subset \Theta_\delta$ and for $\zeta\in C$, the tautological bundle on $\mathfrak{M}_\zeta(\Pi_\Gamma,\delta)$ has one fewer summands than before, and we write it as  $\mathcal{V}=\bigoplus_{0\leq i\leq r} \mathcal{V}_i$, where $\mathcal{V}_0\cong \mathcal{O}_{\mathfrak{M}_\zeta(\Pi_\Gamma,\delta)}$ and $\rank(\mathcal{V}_i)=\dim\rho_i$ for $1\leq i\leq r$. We adopt the same notation 
 \[
 L_C\colon \Theta_\delta \longrightarrow \Pic\big(\mathfrak{M}_\zeta(\Pi_\Gamma,\delta)\big)\otimes_\ZZ \QQ
 \]
 for the linearisation map, where each integer-valued $\eta\in \Theta_\delta$ satisfies $L_C(\eta) = \bigotimes_{1\leq i\leq r} \det(\mathcal{V}_i)^{\otimes \eta_i}$. The following analogue of Theorem~\ref{thm:BC20} is well-known: it was established originally over the field $\CC$ by Kronheimer~\cite{Kronheimer89} using hyperk\"{a}hler reduction.
  
\begin{theorem}[Kronheimer]
\label{thm:Kronheimer}
Let $\Gamma\subset \SL(2,\kk)$ be a nontrivial finite subgroup and let $f\colon S\to \mathbb{A}^2/\Gamma$ be the minimal resolution of the Kleinian singularity. Let $\zeta\in \Theta_\delta$ be a generic stability condition, and let $C$ denote the chamber in $\Theta_\delta$ containing $\zeta$. There is a commutative diagram
    \begin{equation}
        \label{eqn:Kronheimer}
    \begin{tikzcd}
    S\arrow[d]\arrow[r, "f"] & \mathbb{A}^2/\Gamma \arrow[d] \\
\mathfrak{M}_\zeta(\Pi_\Gamma,\delta)\arrow[r] & \mathfrak{M}_0(\Pi_\Gamma,\delta)
    \end{tikzcd}
    \end{equation}
    where the vertical maps are isomorphisms and the lower horizontal map is induced by VGIT. In addition, the GIT chamber decomposition of $\Theta_\delta$ coincides with the Weyl chamber decomposition of type ADE, the linearisation map $L_C$ identifies $\overline{C}$ with the nef cone of $S$, and for each $\eta\in \overline{C}$, the quiver variety $\mathfrak{M}_\eta(\Pi_\Gamma,\delta)$ is the birational model of $S$ determined by the line bundle $L_C(\eta)$.
\end{theorem}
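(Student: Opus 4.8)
The plan is to assemble the statement from the classical geometric McKay correspondence together with the linearisation formalism recalled in Section~\ref{sec:n=1}, specialising the VGIT arguments used for Theorem~\ref{thm:BC20} to the surface case. The central geometric input is the pair of isomorphisms $\mathfrak{M}_\zeta(\Pi_\Gamma,\delta)\cong S$ and $\mathfrak{M}_0(\Pi_\Gamma,\delta)\cong \mathbb{A}^2/\Gamma$: the second records that the ring of $G(\delta)$-invariant functions on $\Rep(\Pi_\Gamma,\delta)$ recovers the coordinate ring of the Kleinian singularity, while the first presents the minimal resolution as the moduli space of $\zeta$-stable $\Pi_\Gamma$-modules of dimension vector $\delta$. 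Over $\CC$ both are Kronheimer's hyperk\"ahler description \cite{Kronheimer89}; over an arbitrary algebraically closed field of characteristic zero they follow from the algebraic treatments of Cassens--Slodowy and Crawley-Boevey. Granting these, the commutative diagram \eqref{eqn:Kronheimer} is immediate, since both horizontal maps are the canonical projective morphisms to the affine quotient and the vertical maps are the compatible isomorphisms just described.

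Next I would identify the GIT fan in $\Theta_\delta$. A $\Pi_\Gamma$-module of dimension vector $\delta$ is $\zeta$-semistable exactly when $\zeta(\beta)\geq 0$ for the dimension vector $\beta$ of every proper nonzero submodule, and the vectors $\beta$ that can arise this way are the positive roots $\alpha\in\Phi^+$ with $\alpha\leq\delta$. Since $\Theta_\delta=\delta^\perp$, the hyperplanes $\beta^\perp$ and $(\delta-\beta)^\perp$ coincide on $\Theta_\delta$, so the distinct walls are precisely $\alpha^\perp$ for $\alpha\in\Phi^+$; this is the surface specialisation of the arrangement $\mathcal{A}$ from \eqref{eqn:mathcalA}, in which only the index $m=0$ survives. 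As the hyperplanes $\alpha^\perp$ ($\alpha\in\Phi^+$) cut out the Weyl chambers of the finite-type root system $\Phi$, the GIT chamber decomposition of $\Theta_\delta$ coincides with the Weyl chamber decomposition of type ADE.

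For the remaining statements I would run the linearisation theory of Section~\ref{sec:n=1}. The map $L_C$ sends $\eta$ to $\bigotimes_{1\leq i\leq r}\det(\mathcal{V}_i)^{\otimes\eta_i}$, and since the first Chern classes of the tautological bundles $\mathcal{V}_i$ form a basis of $\Pic(S)\otimes\QQ$ dual to the classes of the exceptional curves, the line bundle $L_C(\eta)$ is nef if and only if it pairs nonnegatively with every exceptional curve, i.e.\ if and only if $\eta\in\overline{C}$; thus $L_C$ identifies $\overline{C}$ with $\Nef(S)$. Finally, for $\eta$ in a face of $\overline{C}$ the bundle $L_C(\eta)$ is nef, hence semiample on the resolution $S$, so the model $\Proj R(S,L_C(\eta))$ is defined and is obtained by contracting exactly the exceptional curves meeting $L_C(\eta)$ trivially; repeating the VGIT comparison from the proof of Theorem~\ref{thm:BC20}\two, now with $S$ in the role of $X$, identifies this model with $\mathfrak{M}_\eta(\Pi_\Gamma,\delta)$.

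I expect the main obstacle to be the first step: securing the isomorphism $\mathfrak{M}_\zeta(\Pi_\Gamma,\delta)\cong S$ over an arbitrary algebraically closed field of characteristic zero, since Kronheimer's original construction is transcendental and must be replaced by an algebraic argument. Once this geometric input is in hand, the identification of GIT chambers with Weyl chambers, of $\overline{C}$ with the nef cone, and of the boundary models with the corresponding partial resolutions are formal consequences of the linearisation formalism already developed for the case $n>1$.
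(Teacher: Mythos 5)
Your route is genuinely different from the paper's. The paper's entire proof is a single observation: the dimension vector $\delta$ satisfies Crawley-Boevey's condition for $\Pi_\Gamma$ (that is, $\delta\in\Sigma_0$), so the general results of \cite[Theorems~1.2, 4.6 and 4.18]{BCS22} apply verbatim and already contain every assertion of the theorem, including the algebraic (non-transcendental) construction you identify as the main obstacle. This is precisely the point of contrast with the framed case: $v=(1,n\delta)\notin\Sigma_0$, which is why Theorem~\ref{thm:BC20} needs a separate argument while Theorem~\ref{thm:Kronheimer} does not. Your plan instead assembles the statement from the classical McKay correspondence (Kronheimer over $\CC$, Cassens--Slodowy and Crawley-Boevey algebraically) together with the linearisation formalism; this is viable in outline, and your identification of the walls with the hyperplanes $\alpha^\perp$ for $\alpha\in\Phi^+$, using that $\beta^\perp$ and $(\delta-\beta)^\perp$ agree on $\Theta_\delta$, is sound.

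However, your nef-cone step contains a genuine error. You assert that the classes $c_1(\mathcal{V}_i)$ form a basis of $\Pic(S)\otimes\QQ$ dual to the exceptional curves, and deduce that $L_C(\eta)$ is nef if and only if $\eta\in\overline{C}$. These two claims are incompatible unless $C$ is the dominant chamber: if $\deg\big(\det(\mathcal{V}_i)|_{E_j}\big)=\delta_{ij}$ held for the tautological bundles attached to an \emph{arbitrary} chamber $C$, then $L_C$ would carry the dominant cone $\{\eta \mid \eta_i\geq 0 \text{ for all } i\}$ --- not $\overline{C}$ --- onto $\Nef(S)$, contradicting the ampleness of $L_C(\eta)$ for $\eta\in C$ whenever $C$ is not dominant. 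The failure is already visible in type $A_1$: for a stability parameter with $\zeta(\rho_1)<0$ the moduli space is still the minimal resolution, but ampleness of $L_{C}(\zeta)$ forces $\det(\mathcal{V}_1)$ to have degree $-1$ on the exceptional curve. The duality statement is chamber-dependent, so to repair the argument you must either prove the theorem for the dominant chamber and transport it to the other chambers by reflection functors (Weyl-group equivariance), or replace the dual-basis claim by the wall-crossing argument --- injectivity of $L_C$ together with the fact that walls of $C$ induce contractions, hence land in the boundary of $\Nef(S)$ --- which is how \cite{BCS22} proceeds. Relatedly, your final step invokes the framed-case facts \cite[(6.3), Corollary~6.5]{BC20} with $S$ in place of $X$; strictly speaking you need their unframed analogues, and the clean source for these is again \cite{BCS22}, at which point one may as well quote that paper for the whole statement, as the authors do.
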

\begin{proof}
  The dimension vector $\delta$ satisfies Crawley-Boevey's condition for $\Pi_\Gamma$, that is, $\delta\in \Sigma_0$, so a self-contained proof (of more general results) appears in \cite[Theorems~1.2, 4.6 and 4.18]{BCS26}.
\end{proof}
\begin{remark}
\label{rem:0semistable}
    The right-hand isomorphism in \eqref{eqn:Kronheimer} sends any point $z$ in $\mathbb{A}^2/\Gamma$ to the $\Seshadri$-equivalence class of the 0-semistable $\Pi$-module of dimension vector $\delta$ associated to any point in $f^{-1}(z)$.
\end{remark}

Theorem~\ref{thm:Kronheimer} implies that all crepant partial resolutions of $\mathbb{A}^2/\Gamma$ are Nakajima quiver varieties. To see this, we first introduce our notation for crepant partial resolutions. Recall that the exceptional locus of $f\colon S\to \mathbb{A}^2/\Gamma$ is a tree of $(-2)$-curves. The resolution graph,  whose nodes set $I=\{1,\dots, r\}$ is the set of such curves and an edge joins two nodes when the corresponding curves intersect, is a Dynkin diagram of type ADE.  Let $K\subseteq I=\{1,\dots, r\}$ be any subset.  Contracting only those $(-2)$-curves indexed by elements in $K$ determines a factorisation of the minimal resolution $f$ as
\begin{equation}
 \label{eqn:partialresolution}
\begin{tikzcd}
 S \arrow[r] & S_K\arrow[r,"g"] &  \mathbb{A}^2/\Gamma,
\end{tikzcd}
\end{equation}
 where $g$ is a crepant partial resolution. Every crepant partial resolution arises in this way, and every such surface $S_K$ has only Kleinian, hence symplectic, singularities \cite{Beauville00}. To obtain the quiver moduli space description of $S_K$, define a stability condition $\zeta_K\in \Theta_\delta$ for $\Pi_\Gamma$-modules by setting
 \begin{equation}
 \label{eqn:zetaK}
 \zeta_K(\rho_i) = \left\{ \begin{array}{cr} 0 & i\in K \\ 1 & i\in I\setminus K\end{array}\right.,
 \end{equation}
 so $\zeta_K(\rho_0) = -\sum_{i\in I\setminus K} \dim \rho_i$. 

\begin{corollary}
\label{cor:S_K}
For any subset $K\subseteq I=\{1,\dots, r\}$, there is a generic stability condition $\zeta\in \Theta_\delta$ and a commutative diagram
\begin{equation*}
 \label{eqn:partialVGIT}
\begin{tikzcd}
 S\arrow[d]\arrow[r] & S_K\arrow[r,"g"]\arrow[d] & \mathbb{A}^2/\Gamma\arrow[d] \\
\mathfrak{M}_\zeta(\Pi_\Gamma,\delta)\arrow[r] & \mathfrak{M}_{\zeta_K}(\Pi_\Gamma,\delta)\arrow[r] & \mathfrak{M}_0(\Pi_\Gamma,\delta)
 \end{tikzcd}
\end{equation*}
 where the vertical maps are isomorphisms, and the lower horizontal maps are given by VGIT.
\end{corollary}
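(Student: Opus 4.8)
The plan is to realise $S_K$ as a birational model of the minimal resolution $S$ via the linearisation map of Theorem~\ref{thm:Kronheimer}, taking the GIT chamber $C$ to be the dominant Weyl chamber
\[
C = \{\zeta\in \Theta_\delta \mid \zeta(\rho_i) > 0 \text{ for } 1\leq i\leq r\}.
\]
Fixing any generic $\zeta\in C$, Theorem~\ref{thm:Kronheimer} supplies the left-hand vertical isomorphism $S\cong \mathfrak{M}_\zeta(\Pi_\Gamma,\delta)$, identifies the affine quotient $\mathfrak{M}_0(\Pi_\Gamma,\delta)$ with $\mathbb{A}^2/\Gamma$, and identifies $\overline{C}$ with $\Nef(S)$ through $L_C$.

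First I would observe that the parameter $\zeta_K$ from \eqref{eqn:zetaK} lies in $\overline{C}$. Indeed $\zeta_K(\rho_i)\geq 0$ for all $i$, with equality precisely when $i\in K$, so $\zeta_K$ lies in the relative interior of the face $\overline{C}\cap \bigcap_{i\in K}\rho_i^\perp$. By the final clause of Theorem~\ref{thm:Kronheimer}, the quiver variety $\mathfrak{M}_{\zeta_K}(\Pi_\Gamma,\delta)$ is then the birational model of $S$ determined by the semiample line bundle $L_C(\zeta_K) = \bigotimes_{i\in I\setminus K}\det(\mathcal{V}_i)$, that is, $\Proj R(S, L_C(\zeta_K))$.

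The key step is to identify this model with $S_K$ by determining which exceptional curves its associated contraction collapses. Writing $E_1,\dots,E_r$ for the $(-2)$-curves, I would invoke the standard McKay correspondence fact that the tautological determinant bundles are dual to the exceptional curve classes, namely $\deg(\det(\mathcal{V}_i)|_{E_j}) = \delta_{ij}$. This gives $\deg(L_C(\zeta_K)|_{E_j}) = \sum_{i\in I\setminus K}\delta_{ij}$, which equals $0$ for $j\in K$ and $1$ for $j\in I\setminus K$. Since $L_C(\zeta_K)$ is nef and semiample, the morphism it induces from $S$ contracts exactly the curves $E_j$ with $j\in K$, so it coincides with the partial contraction $h\colon S\to S_K$ of \eqref{eqn:partialresolution}. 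Hence $\mathfrak{M}_{\zeta_K}(\Pi_\Gamma,\delta)\cong S_K$, giving the middle vertical isomorphism compatibly with $h$ and the VGIT morphism $\mathfrak{M}_\zeta(\Pi_\Gamma,\delta)\to \mathfrak{M}_{\zeta_K}(\Pi_\Gamma,\delta)$.

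Finally, the right-hand square follows by working over the common affine base: $g\colon S_K\to \mathbb{A}^2/\Gamma$ is the unique induced morphism, matching the factorisation $f = g\circ h$ against the composite VGIT morphism $\mathfrak{M}_\zeta(\Pi_\Gamma,\delta)\to \mathfrak{M}_{\zeta_K}(\Pi_\Gamma,\delta)\to \mathfrak{M}_0(\Pi_\Gamma,\delta)$, and all horizontal maps are VGIT morphisms by construction. I expect the main obstacle to be the curve–bundle duality $\det(\mathcal{V}_i)\cdot E_j = \delta_{ij}$: although classical, making it precise in this GIT setup requires either citing the McKay correspondence literature or extracting it from the wall-to-wall correspondence between $\overline{C}$ and $\Nef(S)$ in Theorem~\ref{thm:Kronheimer}, identifying each wall $\rho_i^\perp$ of the dominant chamber with the nef-cone wall dual to $E_i$.
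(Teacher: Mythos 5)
Your proposal takes essentially the same route as the paper's proof: both fix the dominant Weyl chamber $C$, note $\zeta_K\in\overline{C}$, invoke Theorem~\ref{thm:Kronheimer} to realise $\mathfrak{M}_{\zeta_K}(\Pi_\Gamma,\delta)$ as the birational model of $S$ determined by $L_C(\zeta_K)$, and then compute the degree of $L_C(\zeta_K)$ on each $(-2)$-curve to conclude that precisely the curves indexed by $K$ are contracted. The curve--bundle duality $\deg\bigl(\det(\mathcal{V}_i)|_{E_j}\bigr)=\delta_{ij}$ that you flag as the main obstacle is exactly the fact the paper uses (stated there as: the degree of $L_C(\zeta_K)$ on the curve indexed by $i$ equals $\zeta_K(\rho_i)$), and the paper likewise asserts it without further justification.
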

\begin{proof}
Consider the Weyl chamber $C=\{\zeta\in \Theta_\delta\mid \zeta(\rho_i)>0\text{ for }i>0\}$ in $\Theta_\delta$, and let $\zeta\in C$. We have $\zeta_K\in \overline{C}$, so the VGIT morphisms on the bottom row are well-defined. Theorem~\ref{thm:Kronheimer} shows that $\mathfrak{M}_{\zeta_K}(\Pi_\Gamma,\delta)$ is the birational model of $S$ determined by the basepoint-free line bundle $L_C(\zeta_K)$. The degree of the line bundle $L_C(\zeta_K)$ on the $(-2)$-curve in $S$ indexed by node $i\in I$ is equal to $\zeta_K(\rho_i)$. Equation \eqref{eqn:zetaK} shows that this is zero if and only if $i\in K$, so $\mathfrak{M}_{\zeta_K}(\Pi_\Gamma,\delta)$ is isomorphic to the surface $S_K$ obtained from $S$ by contracting only the $(-2)$-curves indexed by $K$. 
  \end{proof}

\section{The Hilbert scheme as a quiver variety}

Let $n\geq 1$. The $n^\mathrm{th}$ symmetric product of $S_K$, denoted $\Sym^n(S_K)$, is the variety over $\kk$ obtained as the quotient of the $n$-fold product of $S_K$ by the action of the symmetric group on $n$ letters that permutes the factors. The \emph{Hilbert scheme of $n$-points in $S_K$}, denoted $\Hilb^n(S_K)$, is the fine moduli space of length $n$ subschemes in $S_K$. That is, $\Hilb^n(S_K)$ is the scheme over $\kk$ that represents the functor associating to each scheme $X$, the set of closed subschemes $Z\subset X\times S_K$ such that $Z$ is finite, flat and surjective of degree $n$ over $X$.  Throughout, we let $\Hilb^n(S_K)$ denote the underlying reduced subscheme of $\Hilb^n(S_K)$; in fact, the Hilbert scheme is reduced for $n\leq 7$ by \cite[Theorem~1.2]{CY26}.

The Hilbert-Chow morphism   
\[
  \pi\colon \Hilb^n(S_K)\longrightarrow \Sym^n(S_K)
  \] 
  associates to each length $n$ subscheme $Z$ in $S_K$, the effective 0-cycle $\sum_{x\in \supp(\mathcal{O}_Z)} \dim_{\kk}(\mathcal{O}_{Z,x}) x$, where $\supp(\mathcal{O}_Z)$ denotes the support of the sheaf $\mathcal{O}_Z$.

\subsection{The symmetric product as a quiver variety}
Our next goal is to prove that $\Sym^n(S_K)$ is a quiver variety. As a first step, we identify the relevant GIT cone in the space $\Theta_v$ of stability conditions on $\Pi$.

\begin{lemma}
 \label{lem:C_K'}
 For any subset $K\subseteq I=\{1,\dots, r\}$, the cone
\[
\cone{K}' = \big\{\theta\in \Theta_v \mid \theta(\delta) = 0, \theta(\rho_k)=0\text{ for }k\in K, \theta(\rho_i) \geq 0 \text{ for }i\in I\setminus K\big\}
\]
is a GIT cone of dimension $r-\vert K\vert$ that is contained in the simplicial cone $F$ from \eqref{eqn:F}.
\end{lemma}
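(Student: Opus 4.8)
The plan is to separate the statement into three tasks: the containment $\cone{K}'\subseteq F$ and the dimension count, which are linear algebra, and the GIT-cone property, which requires the explicit arrangement $\mathcal{A}$ from \eqref{eqn:mathcalA}. The containment is immediate: every defining condition of $\cone{K}'$ forces a defining condition of $F$, since $\theta(\delta)=0$ and $\theta(\rho_k)=0$ imply $\theta(\delta)\geq 0$ and $\theta(\rho_k)\geq 0$, while $\theta(\rho_i)\geq 0$ for $i\in I\setminus K$ is among the inequalities cutting out $F$.

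For the dimension, recall that $\Theta_v$ is $(r+1)$-dimensional with coordinates $\theta(\rho_0),\dots,\theta(\rho_r)$, and that $\cone{K}'$ lies in the linear subspace cut out by the $1+\vert K\vert$ equations $\theta(\delta)=0$ and $\theta(\rho_k)=0$ ($k\in K$). These are linearly independent: the functional $\theta\mapsto\theta(\delta)=\sum_{i=0}^r(\dim\rho_i)\theta(\rho_i)$ has nonzero coefficient on the coordinate $\theta(\rho_0)$, which is untouched by the coordinate functionals $\theta\mapsto\theta(\rho_k)$ for $k\in K\subseteq\{1,\dots,r\}$. Hence the span has dimension $r-\vert K\vert$, and to see that $\cone{K}'$ is full-dimensional inside it I would exhibit a relative interior point: choose $\theta(\rho_i)>0$ arbitrarily for $i\in I\setminus K$, set $\theta(\rho_k)=0$ for $k\in K$, and fix $\theta(\rho_0)=-\sum_{i\in I\setminus K}(\dim\rho_i)\theta(\rho_i)$ so that $\theta(\delta)=0$. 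All defining inequalities then hold strictly, so $\dim\cone{K}'=r-\vert K\vert$.

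For the GIT-cone property I would use that, by \cite[Theorem~1.1]{BC20}, the GIT fan in $\Theta_v$ is determined by the arrangement $\mathcal{A}$, so it suffices to check that the sign vector of $\theta$ with respect to the hyperplanes of $\mathcal{A}$ is constant as $\theta$ ranges over $\relint(\cone{K}')$, and that the closed cone obtained by intersecting the associated hyperplanes and closed half-spaces is exactly $\cone{K}'$. Taking $\theta\in\relint(\cone{K}')$ as above, I would compute $\theta(\gamma)$ for each defining $\gamma$. Since $\theta(\delta)=0$ we have $\theta(m\delta\pm\alpha)=\pm\theta(\alpha)$ for every $\alpha\in\Phi^+$ and every $0\leq m<n$; writing $\alpha=\sum_{i\in I}c_i\rho_i$ with $c_i\geq 0$ gives $\theta(\alpha)=\sum_{i\in I\setminus K}c_i\theta(\rho_i)$, which vanishes precisely when $\supp(\alpha)\subseteq K$ and is strictly positive otherwise. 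This sign data is independent of the chosen $\theta$, so $\relint(\cone{K}')$ lies in a single face of $\mathcal{A}$; passing to the closure, the two hyperplanes $(m\delta\pm\alpha)^\perp$ impose the same condition $\theta(\alpha)\geq 0$, so the face closure is cut out by $\theta(\delta)=0$, by $\theta(\alpha)=0$ for $\alpha\in\Phi^+$ with $\supp(\alpha)\subseteq K$, and by $\theta(\alpha)\geq 0$ for the remaining positive roots.

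The one genuine step is then to identify this closed cone with $\cone{K}'$, i.e.\ to show that the conditions indexed by all positive roots collapse to the simple-root conditions defining $\cone{K}'$. Since $\rho_k$ is a positive root supported on $K$ for $k\in K$, and $\rho_i$ is a positive root not supported on $K$ for $i\in I\setminus K$, the inclusion of the face closure in $\cone{K}'$ is immediate; conversely, expanding any $\alpha\in\Phi^+$ in the simple-root basis with non-negative coefficients shows $\theta(\alpha)=\sum_{i\in I\setminus K}c_i\theta(\rho_i)\geq 0$ for $\theta\in\cone{K}'$, with equality exactly when $\supp(\alpha)\subseteq K$. I expect this root-system bookkeeping---the equivalence of the ``all positive roots'' and ``simple roots only'' descriptions, together with the identification of the roots on which $\theta$ vanishes with those supported on $K$---to be the only place requiring care; everything else is linear algebra. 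Conceptually, this reflects that $\cone{K}'$ is the face of the dominant Weyl chamber inside $\delta^\perp\cong\Theta_\delta$ obtained by setting the coordinates indexed by $K$ to zero, matching the Weyl-chamber description of the GIT fan in Theorem~\ref{thm:Kronheimer}.
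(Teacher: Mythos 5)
Your proof is correct, but it takes a genuinely different route from the paper's. The paper's argument is much shorter: it quotes \cite[Example~2.1(1)]{BC20} for the fact that the closure of the chamber $\{\theta\in \Theta_v \mid \theta(\rho_i)>(n-1)\theta(\delta)>0 \text{ for all }1\leq i\leq r\}$ is a top-dimensional GIT cone in $F$, observes that $\sigma_K'$ is the face of this cone obtained by intersecting with its supporting hyperplanes $\delta^\perp$ and $\rho_k^\perp$ for $k\in K$, and then invokes the general fact that a face of a cone in a fan again belongs to the fan; the dimension count $(r+1)-1-\vert K\vert$ is immediate. You instead verify from scratch, using the description of the GIT fan via the arrangement $\mathcal{A}$ in \cite[Theorem~1.1]{BC20}, that $\sigma_K'$ is the closure of a single face of $\mathcal{A}$: you compute the sign vector of a relative-interior point against every hyperplane $(m\delta\pm\alpha)^\perp$ and $\delta^\perp$, and check that the associated closed cell is exactly $\sigma_K'$. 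Your route is more self-contained and yields extra information (precisely which hyperplanes of $\mathcal{A}$ contain $\sigma_K'$, namely $\delta^\perp$ and those $(m\delta\pm\alpha)^\perp$ with $\supp(\alpha)\subseteq K$), at the cost of the root-system bookkeeping that the paper's face argument avoids entirely. One small imprecision: for $\theta$ on the boundary of $\sigma_K'$, the equality $\theta(\alpha)=0$ can occur even when $\supp(\alpha)\not\subseteq K$, so your phrase ``with equality exactly when $\supp(\alpha)\subseteq K$'' is only valid on $\relint(\sigma_K')$; however, the two inclusions you actually need use only the weak statements ($\theta(\alpha)\geq 0$ for all $\alpha\in\Phi^+$, and $\theta(\alpha)=0$ whenever $\supp(\alpha)\subseteq K$), so the argument stands.
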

\begin{proof}
The closure of the GIT chamber $\{\theta\in \Theta_v \mid \theta(\rho_i)>(n-1)\theta(\delta)>0 \text{ for all }1\leq i\leq r\}$ is a top-dimensional GIT cone in $F$ \cite[Example~2.1(1)]{BC20}. The cone $\cone{K}'$ is the face of this GIT cone given by intersecting with the supporting hyperplanes $\delta^\perp$ and $\rho_k^\perp$ for all $k\in K$. The face of each cone in a fan also lies in the fan, so $\cone{K}'$ is a GIT cone, and a dimension count gives $\dim\cone{K}' = (r+1)-1-\vert K\vert$.
\end{proof}
 
\begin{proposition}
\label{prop:SymtoSym}
Let $n\geq 1$. For any $\theta'_K\in \relint(\sigma_K')$, we have 
\[
\Sym^n(S_K) \cong \mathfrak{M}_{\theta'_K}(\Pi,v).
\]
 In particular, $\Sym^n(S_K)$ has symplectic singularities, its Picard rank is $r-\vert K\vert$, and variation of GIT quotient induces a symplectic partial resolution $\Sym^n(S_K)\rightarrow \Sym^n(\mathbb{A}^2/\Gamma)$.
\end{proposition}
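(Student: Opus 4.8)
The plan is to establish the isomorphism $\Sym^n(S_K)\cong\M_{\theta_K'}(\Pi,v)$ first, and then deduce the three remaining assertions. Fix $\theta_K'\in\relint(\cone{K}')$ and observe that $\theta_K'(\delta)=0$ forces $\theta_K'(\rho_\infty)=\theta_K'(v)-n\,\theta_K'(\delta)=0$; hence $\theta_K'$ is supported on the McKay subquiver, and its restriction $\bar\theta$ to $\Theta_\delta$ lies in the relative interior of the same face of the dominant Weyl chamber as the parameter $\zeta_K$ of \eqref{eqn:zetaK}. When $n=1$ the argument is immediate: the surface-case discussion of Section~\ref{sec:n=1} removes the redundant framing to give $\M_{\theta_K'}(\Pi,v)\cong\M_{\bar\theta}(\Pi_\Gamma,\delta)\cong\M_{\zeta_K}(\Pi_\Gamma,\delta)\cong S_K=\Sym^1(S_K)$ by Corollary~\ref{cor:S_K}. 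I would therefore concentrate on $n>1$.

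The core of the proof is a classification of the $\theta_K'$-polystable $\Pi$-modules of dimension $v=(1,n\delta)$. First I would show that the framing \emph{decouples}: since $\theta_K'(\rho_\infty)=0$, the vertex simple $S_\infty$ (of dimension $(1,0)$) splits off as a direct summand, so each polystable module has the form $S_\infty\oplus M$ with $M$ a $\bar\theta$-polystable $\Pi_\Gamma$-module of dimension $n\delta$. The substance here is to rule out $\theta_K'$-stable modules of dimension $(1,\beta)$ with $0<\beta$ and $\theta_K'(\beta)=0$; I expect to do this by examining the two arrows incident to $\infty$ together with the preprojective relation $\epsilon(a^*)\,a^*a=0$ at $\infty$, against the strict inequalities $\theta_K'(\rho_i)>0$ for $i\in I\setminus K$ and $\theta_K'(\rho_0)<0$ that hold on $\relint(\cone{K}')$. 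Next, because $\delta$ is isotropic, Crawley--Boevey's numerical criterion excludes $\bar\theta$-stable modules of dimension $m\delta$ for $m\geq 2$ (one has $p(m\delta)=1<m$), so the stable summands of $M$ have dimension a positive root $\gamma$ with $\zeta_K(\gamma)=0$. Comparing with the affine quotient map to $\M_0(\Pi_\Gamma,n\delta)\cong\Sym^n(\mathbb{A}^2/\Gamma)$ and using Remark~\ref{rem:0semistable} then organises these summands into $n$ blocks, each of total dimension $\delta$ and hence each a $\zeta_K$-polystable $\delta$-dimensional module, i.e. a point of $\M_{\zeta_K}(\Pi_\Gamma,\delta)\cong S_K$ (Corollary~\ref{cor:S_K}). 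This produces a bijection between closed points of $\M_{\theta_K'}(\Pi,v)$ and of $\Sym^n(S_K)$.

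It remains to promote this bijection to an isomorphism of schemes, which I regard as the main obstacle. The variety $\M_{\theta_K'}(\Pi,v)$ is normal and irreducible by Gan--Ginzburg and Bellamy--Schedler, and $\Sym^n(S_K)$ is normal; so it suffices to exhibit a projective birational morphism between them and invoke Zariski's main theorem. I would build the morphism from the external direct sum of tautological families: starting from the universal $\Pi_\Gamma$-family on the minimal resolution $S=\M_\zeta(\Pi_\Gamma,\delta)$ for generic $\zeta$ in the chamber with $\zeta_K\in\overline{C}$, pushing forward along $S\to S_K$, forming $S_\infty\oplus(\boxplus_{j=1}^n\,\cdot\,)$ over $S_K^n$, and using corepresentability of the moduli functor to obtain an $S_n$-invariant classifying morphism $S_K^n\to\M_{\theta_K'}(\Pi,v)$ that factors through $\Sym^n(S_K)$; birationality is clear on the locus of $n$ distinct smooth points. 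The delicate point is that $S_K$ is singular and carries no universal family, so some care (or an appeal to the corresponding construction in Craw--Yamagishi~\cite{CY23}) is needed to make the family argument precise.

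Finally, the three supplementary claims follow quickly. Variation of GIT quotient gives a projective morphism $\M_{\theta_K'}(\Pi,v)\to\M_0(\Pi,v)\cong\Sym^n(\mathbb{A}^2/\Gamma)$, which is a crepant partial resolution by the general theory of these quiver varieties recalled in the introduction; transporting it across the isomorphism yields the asserted crepant partial resolution $\Sym^n(S_K)\to\Sym^n(\mathbb{A}^2/\Gamma)$. Since the target is the symmetric product of the Kleinian, hence symplectic, singularity $\mathbb{A}^2/\Gamma$, it has symplectic singularities (Beauville~\cite{Beauville00}), and a crepant partial resolution of a variety with symplectic singularities again has symplectic singularities; thus so does $\Sym^n(S_K)$. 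For the Picard rank, Lemma~\ref{lem:C_K'} gives that $\cone{K}'$ is a GIT cone of dimension $r-\vert K\vert$ contained in $F$, so Corollary~\ref{cor:PicdimC}\one\ yields $\rank\Pic(\M_{\theta_K'}(\Pi,v))=\dim\cone{K}'=r-\vert K\vert$ for $n>1$; for $n=1$ the surface $S_K$ retains $r-\vert K\vert$ exceptional curves, giving the same rank.
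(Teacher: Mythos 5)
Your strategy---classify the $\theta'_K$-polystable modules to get a bijection on closed points, then upgrade to an isomorphism of schemes---has a genuine gap at precisely the step you call ``the main obstacle'', and your proposed fix does not close it. The paper sidesteps this issue entirely: it invokes the canonical decomposition $v=\rho_\infty+\delta+\cdots+\delta$ from \cite[Proposition~4.2(ii)]{BC20} together with the decomposition theorem of Bellamy--Schedler \cite[Theorem~1.4]{BS21}, which yields
\[
\mathfrak{M}_{\theta'_K}(\Pi,v)\cong \mathfrak{M}_{\theta'_K}(\Pi,\rho_\infty)\times \Sym^n\big(\mathfrak{M}_{\theta'_K}(\Pi,\delta)\big)
\]
as an isomorphism of \emph{varieties}, not merely a bijection of points; the first factor is a point, and the second is identified with $\Sym^n(S_K)$ because $\iota\colon\Rep(\Pi_\Gamma,\delta)\hookrightarrow\Rep(\Pi,\delta)$ is a $G(\delta)$-equivariant isomorphism (the $\infty$-component of $\delta$ is zero) intertwining $\zeta_K$- and $\theta'_K$-semi-invariants, whence $\mathfrak{M}_{\theta'_K}(\Pi,\delta)\cong\mathfrak{M}_{\zeta_K}(\Pi_\Gamma,\delta)\cong S_K$ by Corollary~\ref{cor:S_K}. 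Your substitute---a classifying morphism $S_K^n\to\mathfrak{M}_{\theta'_K}(\Pi,v)$ built from tautological families---cannot work as stated: $\zeta_K$ and $\theta'_K$ are not generic, so $S_K\cong\mathfrak{M}_{\zeta_K}(\Pi_\Gamma,\delta)$ and $\mathfrak{M}_{\theta'_K}(\Pi,v)$ are only \emph{coarse} moduli spaces of $\Seshadri$-equivalence classes. There is no universal family on $S_K$, and the pushforward of the tautological bundle along $h\colon S\to S_K$ is not a flat family of $\zeta_K$-semistable $\Pi_\Gamma$-modules whose fibres over singular points are the required polystable modules; nor does a coarse moduli space corepresent families in the way your appeal to ``corepresentability'' needs. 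You flag this hole yourself, but flagging it does not fill it; filling it is essentially the content of \cite[Theorem~1.4]{BS21}, which you never invoke.

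Two further steps are underestimated. First, ruling out $\theta'_K$-stable modules of dimension $(1,\beta)$ with $\beta\neq 0$ is not a matter of ``examining the two arrows incident to $\infty$ and the relation $a^*a=0$''; it requires Crawley--Boevey's description of dimension vectors of stable modules (the set $\Sigma_{\theta}$), which is exactly the computation carried out in \cite[Proposition~4.2]{BC20}. Second, organising the stable summands of the $n\delta$-part into $n$ blocks of total dimension $\delta$ needs an argument, since the grouping is not canonical when several summands lie over singular points of $S_K$. By contrast, your treatment of the supplementary claims is essentially correct and matches the paper: VGIT plus Theorem~\ref{thm:BC20} gives the crepant partial resolution, and Lemma~\ref{lem:C_K'} with Corollary~\ref{cor:PicdimC}\one\ gives the Picard rank for $n>1$ (with $n=1$ handled separately, as it must be). Your route to symplectic singularities (crepant partial resolutions of varieties with symplectic singularities again have symplectic singularities, applied over $\Sym^n(\mathbb{A}^2/\Gamma)$) is a valid alternative to the paper's direct citation of \cite[(2.5)]{Beauville00} for finite quotients, though it relies on a nontrivial fact that would itself need a reference.
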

\begin{proof}
Since GIT quotients for stability conditions in the relative interior of the same GIT cone are isomorphic, it suffices to show that $\Sym^n(S_K) \cong \mathfrak{M}_{\theta'_K}(\Pi,v)$ for some 
$\theta'_K\in \relint(\sigma_K^\prime)$. Consider the stability condition $\theta'_K\in \Theta_v$ for $\Pi$-modules determined by setting 
 \begin{equation}
 \label{eqn:theta}
 \theta'_K(\rho_i) = \left\{ \begin{array}{cr} 0 & \text{ for }i\in \{\infty\}\cup K \\ 1 & \text{for }i\in I\setminus K\end{array}\right..
 \end{equation} 
 Since $\theta'_K \in \Theta_v$, we have $n\theta'_K(\delta) = -\theta'_K(\rho_\infty)=0$, and hence $\theta'_K\in  \relint(\sigma_K')$. We may apply \cite[Proposition~4.2\two]{BC20} to see that the canonical decomposition of $v$ is $v=\rho_\infty+\delta+\cdots +\delta$, where $\delta$ appears $n$ times. The decomposition theorem of Bellamy--Schedler~\cite[Theorem~1.4]{BS21} gives
\begin{equation}
\label{eqn:decomp}
\mathfrak{M}_{\theta'_K}(\Pi,v)\cong \mathfrak{M}_{\theta'_K}(\Pi,\rho_\infty)\times \Sym^n\big(\mathfrak{M}_{\theta'_K}(\Pi,\delta)\big).
\end{equation}
There is a unique $\Pi$-module of dimension vector $\rho_\infty$ up to isomorphism, so the first factor here is a single closed point. 

We claim that $\mathfrak{M}_{\theta'_K}(\Pi,\delta)\cong S_K$. Indeed, when regarded as a vector in $\ZZ\oplus R(\Gamma)$, the component of $\delta$ at vertex $\infty$ equals zero, so the closed immersion $\iota\colon \Rep(\Pi_\Gamma,\delta)\hookrightarrow \Rep(\Pi, \delta)$ of representation schemes is actually a $G(\delta)$-equivariant isomorphism. Furthermore, the characters $
\zeta_K$ and $\theta'_K$ from \eqref{eqn:zetaK} and \eqref{eqn:theta} respectively are compatible, in the sense that $f\in \kk[\Rep(\Pi,\delta)]$ is $\theta'_K$-semi-invariant if and only $\iota^*(f)\in \kk[\Rep(\Pi_\Gamma,\delta)]$ is $\zeta_K$-semi-invariant. Therefore
\begin{equation}
    \label{eqn:zetaKthetaK}
 \mathfrak{M}_{\zeta_K}(\Pi_\Gamma,\delta)\cong \Rep(\Pi_\Gamma,\delta)\git_{\zeta_K} \! G(\delta) \cong \Rep(\Pi,\delta)\git_{\theta'_K} \! G(\delta) \cong \mathfrak{M}_{\theta'_K}(\Pi,\delta).
 \end{equation}
 The claim follows from Corollary~\ref{cor:S_K}, so $\mathfrak{M}_{\theta'_K}(\Pi,v)\cong \Sym^n(S_K)$. This proves the first statement. 
 
When $n=1$, the second statement follows from Corollary~\ref{cor:S_K} and the isomorphism \eqref{eqn:zetaKthetaK}. For $n>1$, the variety $\Sym^n(S_K)$ has symplectic singularities by \cite[(2.5)]{Beauville00}. Also, since $\theta'_K\in F$, Theorem~\ref{thm:BC20} gives that the VGIT morphism $\mathfrak{M}_{\theta_K'}(\Pi,v)\to \mathfrak{M}_{0}(\Pi,v)\cong \Sym^n(\mathbb{A}^2/\Gamma)$ is a symplectic partial resolution. 
 The calculation of the Picard rank follows from Corollary~\ref{cor:PicdimC} and Lemma~\ref{lem:C_K'}.
\end{proof}

\begin{corollary}
\label{cor:n=1}
  Theorem~\ref{thm:mainnew} holds when $n=1$.
\end{corollary}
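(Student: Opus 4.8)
The plan is to reduce the $n=1$ case entirely to the surface geometry already in hand, using that $\Hilb^1(S_K)=S_K$ by the very definition of the Hilbert scheme of a single point. The starting point is Proposition~\ref{prop:SymtoSym}, which for $n=1$ gives $S_K\cong\M_{\theta_K'}(\Pi,v)$ for any $\theta_K'\in\relint(\sigma_K')$; the only genuine task is therefore to upgrade the stability parameter from $\relint(\sigma_K')$ to $\relint(\sigma_K)$.

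First I would record that for $n=1$ the quantity $(n-1)\theta(\delta)$ vanishes, so the defining conditions of $\sigma_K$ read $\theta(\delta)\geq 0$, $\theta(\rho_k)=0$ for $k\in K$, and $\theta(\rho_i)\geq 0$ for $i\in I\setminus K$; in particular $\sigma_K\subseteq F$. The crucial observation is that the generator $\theta_0=(-1,1,0,\dots,0)$ of the one-dimensional kernel of $L_C$ identified in Section~\ref{sec:n=1} satisfies $\theta_0(\delta)=1$ and $\theta_0(\rho_i)=0$ for all $1\leq i\leq r$. A direct check then shows $\sigma_K=\sigma_K'+\RR_{\geq 0}\theta_0$, with $\sigma_K'$ realised as the facet $\delta^\perp\cap\sigma_K$ and $\theta_0\notin\Span(\sigma_K')$; hence $\relint(\sigma_K)=\relint(\sigma_K')+\RR_{>0}\theta_0$, and any $\theta_K\in\relint(\sigma_K)$ decomposes as $\theta_K=\theta_K'+t\theta_0$ with $\theta_K'\in\relint(\sigma_K')$ and $t>0$. (This decomposition remains valid in the extreme case $K=I$, where $\sigma_I'=\{0\}$ and $\sigma_I=\RR_{\geq 0}\theta_0$.)

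Next I would invoke the $n=1$ analogue of Theorem~\ref{thm:BC20}\two, which holds verbatim by the discussion in Section~\ref{sec:n=1}. Since $\theta_K,\theta_K'\in F$ and $L_C(\theta_K)=L_C(\theta_K')+t\,L_C(\theta_0)=L_C(\theta_K')$, the varieties $\M_{\theta_K}(\Pi,v)$ and $\M_{\theta_K'}(\Pi,v)$ are both the birational model, over $\mathbb{A}^2/\Gamma\cong\M_0(\Pi,v)$, of a fixed nonsingular quiver variety $\M_\theta(\Pi,v)$ (generic $\theta\in F$) determined by one and the same line bundle, so they are isomorphic. Combined with Proposition~\ref{prop:SymtoSym} this yields $\M_{\theta_K}(\Pi,v)\cong S_K=\Hilb^1(S_K)$, the isomorphism asserted by Theorem~\ref{thm:mainnew}. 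For the concluding clause, I would note that $S_K$ is a normal surface whose only singularities are Kleinian (equivalently du Val, equivalently quotient) singularities, hence $S_K$ is automatically normal, $\QQ$-factorial, and endowed with symplectic singularities; its unique crepant resolution is the minimal resolution $S$, which arises by variation of GIT quotient for $\M(\Pi,v)$ upon choosing $\theta$ generic in the GIT chamber of $F$ whose closure contains $\sigma_K$, using Corollary~\ref{cor:S_K} together with the redundancy of the framing vertex when $n=1$.

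The one point requiring care—indeed the only real obstacle—is precisely the passage from $\sigma_K'$ to $\sigma_K$: one must be certain that translating the stability parameter along the kernel direction $\theta_0$ leaves the actual quiver variety unchanged, not merely its class in $\Pic\otimes\QQ$. This is exactly the force of the $n=1$ form of Theorem~\ref{thm:BC20}\two, so the remaining work lies in verifying its hypotheses (that $\theta_K$ and $\theta_K'$ both lie in $F$ and pull back the same line bundle under a common linearisation map $L_C$) rather than in any new construction.
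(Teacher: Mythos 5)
Your proof is correct and takes essentially the same route as the paper: both identify $\Hilb^1(S_K)=\Sym^1(S_K)$ with $\M_{\theta_K'}(\Pi,v)$ via Proposition~\ref{prop:SymtoSym}, and then use the fact that $\ker(L_C)$ is spanned by $(-1,1,0,\dots,0)$ together with the $n=1$ analogue of Theorem~\ref{thm:BC20}\two\ to conclude $\M_{\theta_K}(\Pi,v)\cong\M_{\theta_K'}(\Pi,v)$, with the remaining geometric properties read off from $\Hilb^1(S_K)\cong S_K$. Your explicit check of the decomposition $\sigma_K=\sigma_K'+\RR_{\geq 0}\theta_0$ simply spells out the step the paper states as $L_C(\sigma_K)=L_C(\sigma_K')$.
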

\begin{proof}
 For $\theta_K\in \relint(\sigma_K)$ and $\theta_K^\prime\in \relint(\sigma_K^\prime)$, we claim there is a commutative diagram 
\begin{equation*}
 \label{eqn:Hilb1}
\begin{tikzcd}
 \Hilb^1(S_K)\arrow[d]\arrow[r,"\pi"] & \Sym^1(S_K)\arrow[d] \\
\mathfrak{M}_{\theta_K}(\Pi,v)\arrow[r] & \mathfrak{M}_{\theta_K^\prime}(\Pi,v)
 \end{tikzcd}
\end{equation*}
 in which all maps are isomorphisms. The Hilbert--Chow morphism at the top is an isomorphism, and the right-hand map is the isomorphism from Proposition~\ref{prop:SymtoSym}, so the first statement of Theorem~\ref{thm:mainnew} follows once we show that the VGIT morphism at the bottom of the diagram is an isomorphism. Since $n=1$, the hyperplane arrangement defining the GIT cone is $\mathcal{A}=\{\delta^\perp,\alpha^\perp \mid \alpha\in \Phi^+\}$. If $C\subseteq F$ denotes a GIT chamber containing $\sigma_K'$ in its closure, then we saw at the start of Section~\ref{sec:n=1} that $\ker(L_C)$ is spanned by $(-1,1,0,\dots, 0)\in \Theta_v$, so $L_C(\sigma_K)=L_C(\sigma_K^\prime)$. In particular, the birational models determined by $L_C(\theta_K)$ and $L_C(\theta_K')$ coincide, giving $\mathfrak{M}_{\theta_K}(\Pi,v)\cong  \mathfrak{M}_{\theta_K'}(\Pi,v)$, so the first statement of Theorem~\ref{thm:mainnew} holds. The geometric properties listed in the second statement of Theorem~\ref{thm:mainnew} all follow in the case $n=1$ from the isomorphism $\Hilb^1(S_K)\cong S_K$.
\end{proof}

\subsection{The Hilbert scheme is a quiver variety}
 From now on, assume $n>1$. Let $W \subseteq \Sym^n(S_K)$ denote the locus consisting of points for which the support of the corresponding cycle in $S_K$ contains at least one singular point of $S_K$. Notice that $\Sym^n(S_K) = \Sym^n(S_K^{\reg}) \sqcup W$, where $S_K^{\reg}$ denotes the non-singular locus in $S_K$.
  
 \begin{lemma}
 \label{lem:preimageW}
    The locus $W \subseteq \Sym^n(S_K)$ is of codimension two. Moreover, the preimage $\pi^{-1}(W)$ has codimension two in $\Hilb^n(S_K)$, and the complement of $\pi^{-1}(W)$ in $\Hilb^n(S_K)$ is $\Hilb^n(S_K^{\reg})$. 
  \end{lemma}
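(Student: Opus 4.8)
The plan is to handle the three assertions separately, noting that the codimension bounds for $W$ and for the complement are essentially bookkeeping, while the codimension bound for $\pi^{-1}(W)$ is the substantial point. For the first assertion I would stratify $\Sym^n(S_K)$ by the multiplicity type of the effective $0$-cycle. Since $S_K$ has only finitely many singular points, a cycle in $W$ is supported at $t\ge 1$ singular points (each contributing nothing to the dimension of its stratum) together with $d$ distinct points of $S_K^{\reg}$ (each moving in the $2$-dimensional surface $S_K^{\reg}$). If the singular part carries total multiplicity $A\ge t$, then $d\le n-A\le n-1$, so the stratum has dimension $2d\le 2n-2$, with equality exactly for the open stratum of $W$ of cycles having one simple singular point and $n-1$ distinct simple nonsingular points. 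As $\dim\Sym^n(S_K)=2n$, this shows $W$ has dimension $2n-2$, i.e. codimension two. (When $S_K$ is nonsingular the set $W$ is empty and there is nothing to prove, so we may assume $S_K$ is singular.)

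For the third assertion, observe that $\Sym^n(S_K)\setminus W=\Sym^n(S_K^{\reg})$, since a cycle avoids every singular point precisely when it is supported in $S_K^{\reg}$. The open immersion $S_K^{\reg}\hookrightarrow S_K$ induces an open immersion $\Hilb^n(S_K^{\reg})\hookrightarrow\Hilb^n(S_K)$ whose image is exactly the locus of length-$n$ subschemes supported in $S_K^{\reg}$, and a subscheme $Z$ is supported in $S_K^{\reg}$ if and only if $\pi(Z)\in\Sym^n(S_K^{\reg})$. Hence the complement of $\pi^{-1}(W)$ equals $\pi^{-1}(\Sym^n(S_K^{\reg}))=\Hilb^n(S_K^{\reg})$.

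The heart of the matter is the second assertion. Because the support points of a cycle are distinct, the fibre of $\pi$ over $\sum_s m_s x_s$ is the product $\prod_s\Hilb^{m_s}_{x_s}(S_K)$ of punctual Hilbert schemes, and a punctual Hilbert scheme depends only on the complete local ring at the point. At a nonsingular point it therefore has dimension $m-1$, as is classical for smooth surfaces, while at a singular point $p$ it is isomorphic to the punctual Hilbert scheme of the Kleinian singularity $\mathbb{A}^2/\Gamma'$ giving the analytic type of $(S_K,p)$. Writing $c(a)$ for the dimension of the latter, I would prove the bound $c(a)\le 2a-2$ for all $a\ge 1$: the case $a=1$ is immediate since there is a unique length-one subscheme, while for $a\ge 2$ the Hilbert--Chow morphism $\Hilb^a(\mathbb{A}^2/\Gamma')\to\Sym^a(\mathbb{A}^2/\Gamma')$ is a semismall (Poisson) morphism from the $2a$-dimensional variety with symplectic singularities produced in \cite{CY23}, so its fibre over the deepest point $a\cdot[0]$, a $0$-dimensional stratum of full codimension $2a$, has dimension at most $a\le 2a-2$. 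Feeding these bounds into the fibre product and summing over a stratum with $d$ distinct nonsingular points of total multiplicity $b$ and $t\ge 1$ singular points of multiplicities $a_1,\dots,a_t$ with $A=\sum_j a_j$, the dimension of the part of $\pi^{-1}(W)$ over this stratum is
\[
2d+\sum_{i}(m_i-1)+\sum_{j}c(a_j)\;\le\;2b+\sum_{j}(2a_j-2)\;=\;2n-2t\;\le\;2n-2,
\]
using $d\le b$, the bound $c(a_j)\le 2a_j-2$, and $b+A=n$. Since the open stratum of $W$ has $t=1$ and $0$-dimensional fibres, $\pi^{-1}(W)$ attains dimension $2n-2$, and so has codimension two in the $2n$-dimensional variety $\Hilb^n(S_K)$.

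The main obstacle is the dimension bound $c(a)\le 2a-2$ on the punctual Hilbert scheme at a Kleinian singular point; everything else is combinatorial bookkeeping with the stratification. It is worth stressing why one cannot simply invoke semismallness of the global Hilbert--Chow morphism: over the generic stratum of $W$, which has codimension two, semismallness only forbids fibres of dimension greater than one, whereas the actual fibre there is a single reduced point. This would yield codimension one only. The point of decomposing the fibre into punctual factors is that the half-dimensionality bound becomes sharp enough precisely at the \emph{deepest} strata, where it is measured against the full codimension $2a$; pushing each singular point into its own punctual fibre and applying the bound there is what converts the weak global estimate into the required codimension-two statement.
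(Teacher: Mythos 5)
Your overall strategy is sound, and it is genuinely different from the paper's proof, which consists of a single citation to \cite[Lemma~6.11]{CY23}: you give a direct argument by stratifying $\Sym^n(S_K)$ by multiplicity type, factoring the Hilbert--Chow fibres into products of punctual Hilbert schemes, and reducing everything to a dimension bound at the singular points of the local model. Your first and third assertions are proved correctly, the estimate $2d+\sum_i(m_i-1)+\sum_j c(a_j)\leq 2n-2t\leq 2n-2$ is right, and your closing remark --- that semismallness of the global morphism $\pi$ would only rule out codimension one, so the half-dimensional bound must be applied at each singular point separately, where it is measured against the full local codimension $2a$ --- is exactly the correct diagnosis of where the strength of the estimate comes from.

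The gap is in the step you yourself flag as the heart of the matter, the bound $c(a)\leq a$. You justify it by asserting that $\Hilb^a(\mathbb{A}^2/\Gamma')\to\Sym^a(\mathbb{A}^2/\Gamma')$ is semismall because it is a Poisson (crepant) morphism whose source has symplectic singularities by \cite{CY23}. The theorem you are implicitly invoking (Kaledin's semismallness theorem) applies to symplectic \emph{resolutions}, i.e.\ it requires the source to be nonsingular, whereas $\Hilb^a(\mathbb{A}^2/\Gamma')$ is singular; for a singular source the usual isotropy-of-fibres argument breaks down (one cannot restrict a symplectic form to the source), and semismallness of crepant \emph{partial} resolutions is not a result you can simply quote, so as written this step is an assertion rather than a proof. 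It is fixable with tools already in the paper: by \cite[Theorem~1.3]{CY23} (see also \cite{CGS21}), the Hilbert--Chow morphism for $\mathbb{A}^2/\Gamma'$ is the VGIT morphism $\M_{\theta}(\Pi',v')\to\M_0(\Pi',v')$ for the framed McKay quiver of $\Gamma'$ and $v'=(1,a\delta')$, where $\theta$ lies in the closure of some GIT chamber $C$. Choosing a generic $\theta''\in C$ gives a projective birational morphism $\M_{\theta''}(\Pi',v')\to\M_{\theta}(\Pi',v')$ from a nonsingular quiver variety, and the composition $\M_{\theta''}(\Pi',v')\to\M_0(\Pi',v')\cong\Sym^a(\mathbb{A}^2/\Gamma')$ is then a genuine symplectic resolution. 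Kaledin's theorem (or Nakajima's theorem that the fibre over the cone point is a half-dimensional Lagrangian, cf.\ \cite{Nakajima98}) bounds the dimension of its fibre over $a\cdot[0]$ by $a$; since $\M_{\theta''}(\Pi',v')\to\M_{\theta}(\Pi',v')$ is surjective, the Hilbert--Chow fibre over $a\cdot[0]$, whose dimension is $c(a)$, is the image of that fibre, giving $c(a)\leq a$. With this patch your proof is complete, at the cost of resting on the quiver-variety description of $\Hilb^a(\mathbb{A}^2/\Gamma')$, whereas the paper's one-line proof uses only the codimension estimates established in \cite{CY23} for arbitrary surfaces with symplectic singularities.
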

  \begin{proof}
  Since $S_K$ has symplectic singularities, this follows from \cite[Lemma~6.9]{CY26}.
  \end{proof}

\begin{theorem}\label{thm:HilbisNQV}
  There is a GIT cone $\tau_K$ satisfying $\sigma'_K\subsetneq \tau_K\subseteq F$, that is not contained in $\delta^\perp$, such that $  \Hilb^n(S_K)\cong \mathfrak{M}_{\theta_K}(\Pi,v)$ for $\theta_K\in \relint(\tau_K)$. In particular, variation of GIT quotient for quiver varieties gives a crepant partial resolution   $\Hilb^n(S_K)\to \Sym^n(\mathbb{A}^2/\Gamma)$.
  \end{theorem}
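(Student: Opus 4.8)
The plan is to realise $\Hilb^n(S_K)$ as a quiver variety by combining the symmetric-product identification from Proposition~\ref{prop:SymtoSym} with the Mori-theoretic machinery of Theorem~\ref{thm:BC20}. First I would invoke Craw--Yamagishi~\cite[Section~6.3]{CY23}: since $S_K$ has symplectic singularities, the composition of the Hilbert--Chow morphism $\pi\colon\Hilb^n(S_K)\to\Sym^n(S_K)$ with the VGIT crepant partial resolution $\Sym^n(S_K)\to\Sym^n(\mathbb{A}^2/\Gamma)$ from Proposition~\ref{prop:SymtoSym} yields a projective crepant partial resolution
\[
\kappa\colon \Hilb^n(S_K)\longrightarrow \Sym^n(\mathbb{A}^2/\Gamma).
\]
By the final sentence of Theorem~\ref{thm:BC20}, every projective crepant partial resolution of $\Sym^n(\mathbb{A}^2/\Gamma)$ is isomorphic to $\mathfrak{M}_\eta(\Pi,v)$ for some $\eta\in F$; applying this to $\kappa$ produces a GIT cone $\tau_K\subseteq F$ with $\Hilb^n(S_K)\cong\mathfrak{M}_{\theta_K}(\Pi,v)$ for $\theta_K\in\relint(\tau_K)$, where $\tau_K$ is the unique GIT cone with $\theta_K$ in its relative interior.

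Next I would pin down the position of $\tau_K$ relative to $\sigma_K'$. The key observation is that $\kappa$ factors through the VGIT morphism $\Sym^n(S_K)\cong\mathfrak{M}_{\theta_K'}(\Pi,v)\to\Sym^n(\mathbb{A}^2/\Gamma)$ for $\theta_K'\in\relint(\sigma_K')$. Because Theorem~\ref{thm:BC20}\two\ identifies the factorisation of VGIT morphisms with the face structure of the GIT fan under the linearisation map, the morphism $\Hilb^n(S_K)\to\Sym^n(S_K)$ corresponds to a contraction, and hence $\sigma_K'$ must be a face of $\tau_K$; in particular $\sigma_K'\subseteq\tau_K$. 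To upgrade this to the strict inclusion $\sigma_K'\subsetneq\tau_K$ together with $\tau_K\not\subseteq\delta^\perp$, I would compare Picard ranks via Corollary~\ref{cor:PicdimC}\one: the rank of $\Pic(\mathfrak{M}_{\theta_K}(\Pi,v))$ equals $\dim(\tau_K)$, so it suffices to show $\dim(\tau_K)>\dim(\sigma_K')=r-\vert K\vert$. This is where analysing the class group of $\Hilb^n(S_K)$ enters: the Hilbert--Chow morphism $\pi$ is a genuine (non-isomorphism) crepant partial resolution for $n>1$, contracting the big-diagonal divisor, so $\Pic(\Hilb^n(S_K))$ has rank strictly greater than $\Pic(\Sym^n(S_K))=r-\vert K\vert$ (the latter computed in Proposition~\ref{prop:SymtoSym}). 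The exceptional divisor of $\pi$ surjects onto $\Sym^n(S_K)$ outside codimension two by Lemma~\ref{lem:preimageW}, so it is not contracted by the map to $\Sym^n(\mathbb{A}^2/\Gamma)$ and therefore contributes a class whose image under $L_C$ does not lie in $\delta^\perp$; this forces $\tau_K\not\subseteq\delta^\perp$.

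Concretely, I expect the cleanest route to the class-group computation is to work over the smooth locus: by Lemma~\ref{lem:preimageW} the complement of $\pi^{-1}(W)$ in $\Hilb^n(S_K)$ is $\Hilb^n(S_K^{\reg})$, and since class groups are insensitive to removing codimension-two loci, one can compute $\Class(\Hilb^n(S_K))\cong\Class(\Hilb^n(S_K^{\reg}))$. For the nonsingular surface $S_K^{\reg}$, Fogarty's theorem gives $\Pic(\Hilb^n(S_K^{\reg}))\cong\Pic(S_K^{\reg})\oplus\ZZ\cdot\tfrac12 E$, where $E$ is the exceptional divisor of the Hilbert--Chow morphism; the extra $\ZZ$ summand is precisely the source of the rank jump, and its image under $L_C$ is the class pairing nontrivially against $\delta$. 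The main obstacle, and the step I would spend the most care on, is establishing the precise rank equality $\dim(\tau_K)=(r-\vert K\vert)+1$ rather than merely the strict inequality: I anticipate this requires matching the one extra generator of the class group against the single additional supporting hyperplane one crosses when passing from $\sigma_K'$ into the interior of $F$, which in turn rests on $\sigma_K'$ lying in the single hyperplane $\delta^\perp$ of those bounding the relevant chamber. Once the rank is shown to increase by exactly one and $\sigma_K'$ is a codimension-one face of $\tau_K$ not contained only in $\delta^\perp$, the strict inclusion $\sigma_K'\subsetneq\tau_K\subseteq F$ with $\tau_K\not\subseteq\delta^\perp$ follows, and the crepant partial resolution statement is exactly the VGIT morphism $\mathfrak{M}_{\theta_K}(\Pi,v)\to\mathfrak{M}_0(\Pi,v)\cong\Sym^n(\mathbb{A}^2/\Gamma)$ guaranteed by Theorem~\ref{thm:BC20}.
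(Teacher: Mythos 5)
Your scaffolding for the first half of the statement coincides with the paper's proof: composing the Hilbert--Chow morphism with the VGIT morphism of Proposition~\ref{prop:SymtoSym} via \cite{CY23} to get the crepant partial resolution $\kappa$, invoking Theorem~\ref{thm:BC20} to produce the cone $\tau_K\subseteq F$ with $\Hilb^n(S_K)\cong\mathfrak{M}_{\theta_K}(\Pi,v)$, and using the factorisation of $\kappa$ through $\Sym^n(S_K)$ to see that $\sigma_K'$ is a face of $\tau_K$. Your Picard-rank route to the strict inclusion (Corollary~\ref{cor:PicdimC} plus the rank jump under the non-trivial contraction $\pi$) is also sound, and is essentially how the paper later proves Proposition~\ref{prop:PicHilb}; in the paper itself strictness falls out of the $\delta^\perp$ claim, since $\sigma_K'\subseteq\delta^\perp$.

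The genuine gap is your argument that $\tau_K\not\subseteq\delta^\perp$, which is the crux of the theorem. The statement you attribute to Lemma~\ref{lem:preimageW} is false: the exceptional divisor of $\pi$ does not ``surject onto $\Sym^n(S_K)$ outside codimension two''; it maps onto the big diagonal, which has codimension two in $\Sym^n(S_K)$, so it \emph{is} contracted by $\kappa$, and Lemma~\ref{lem:preimageW} asserts nothing of this kind. The ensuing claim that this divisor ``contributes a class whose image under $L_C$ does not lie in $\delta^\perp$'' is therefore unsupported; it also conflates stability parameters with divisor classes, and identifying which parameter corresponds to the class of the exceptional divisor presupposes knowing $\tau_K$. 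More fundamentally, no rank count can close this gap: the GIT cones contained in $F\cap\delta^\perp$ are precisely the cones $\sigma_L'$, so if $\tau_K\subseteq\delta^\perp$ then (using your strictness) $\Hilb^n(S_K)\cong\Sym^n(S_L)$ for some $L\subsetneq K$, and when $\vert L\vert=\vert K\vert-1$ Proposition~\ref{prop:SymtoSym} gives that $\Sym^n(S_L)$ has Picard rank $r-\vert L\vert=r-\vert K\vert+1$, which is exactly the rank you compute for $\Hilb^n(S_K)$. The bad case is numerically indistinguishable from the true one, so it must be excluded geometrically.

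This exclusion is where the actual content of Lemma~\ref{lem:preimageW} enters, and it is the step your proposal is missing. If $\tau_K=\sigma_L'$, then VGIT gives an isomorphism $\varepsilon\colon\Hilb^n(S_K)\to\Sym^n(S_L)$ commuting with $\pi$ and with $\pi'=\Sym^n(S_L\to S_K)$, as in \eqref{eqn:HilbnotSym}. Choosing $k\in K\setminus L$ with contracted curve $\ell_k\subset S_L$, the locus $W_k\subset\Sym^n(S_L)$ of cycles meeting $\ell_k$ contains a divisor, so $(\pi')^{-1}(\pi'(W_k))$ has codimension one; on the other hand $\pi'(W_k)$ lies in the locus $W$, and Lemma~\ref{lem:preimageW} states that $\pi^{-1}(W)$ has codimension two in $\Hilb^n(S_K)$, so $\pi^{-1}(\pi'(W_k))$ has codimension at least two. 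Since $\varepsilon$ identifies these two preimages, this is a contradiction. The codimension-two statement for $\pi^{-1}(W)$, a consequence of \cite{CY23}, is the key geometric fact distinguishing the Hilbert--Chow contraction from the contraction $\pi'$; without it, or some equivalent input, the claim $\tau_K\not\subseteq\delta^\perp$ does not follow from what you have written.
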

\begin{proof}
  Since $S_K$ has canonical singularities, it follows from Craw--Yamagishi~\cite[Theorem~1.1]{CY26} that $\Hilb^n(S_K)$ has canonical singularities. Kaplan--Schedler~\cite[Section~4.3]{KS24} leverages this result to construct a unique (globally) projective crepant resolution of $\Hilb^n(S_K)$ such that the composition with the Hilbert--Chow morphism $\pi\colon \Hilb^n(S_K)\to \Sym^n(\mathbb{A}^2/\Gamma)$ is a crepant resolution, so $\pi$ itself is a crepant partial resolution.  It follows from Theorem~\ref{thm:BC20} that $\Hilb^n(S_K)\cong \mathfrak{M}_{\theta_K}(\Pi,v)$ for some $\theta_K\in F$. Let $\tau_K\subseteq F$ denote the unique GIT cone that contains $\theta_K$ in its relative interior. The Hilbert--Chow morphism fits into a commutative diagram
      \begin{equation}
 \label{eqn:crepantVGIT}
\begin{tikzcd}
  \Hilb^n(S_K) \ar[d]\ar[r,"\pi"] &  \Sym^n(S_K)\ar[d] \\
  \mathfrak{M}_{\theta_K}(\Pi,v) \ar[r] & \mathfrak{M}_{\theta'_K}(\Pi,v)
  \end{tikzcd}
  \end{equation}
  in which both vertical maps are isomorphisms, and the lower morphism is given by variation of GIT quotient. As a result, the GIT cone $\cone{K}'$ that determines $\mathfrak{M}_{\theta'_K}(\Pi,v)
$ is a face of the GIT cone $\tau_K$ that determines $ \mathfrak{M}_{\theta_K}(\Pi,v) $. Combining diagram \eqref{eqn:crepantVGIT} with a similar diagram for the morphism $\Sym^n(S_K)\rightarrow \Sym^n(\mathbb{A}^2/\Gamma)$ from Proposition~\ref{prop:SymtoSym} shows that the desired crepant partial resolution is obtained by variation of GIT quotient for quiver varieties.

It remains to show $\tau_K$ is not contained in the hyperplane $\delta^\perp$. Proposition~\ref{prop:SymtoSym} shows that each face of the cone  $F\cap \delta^\perp$ is of the form $\sigma_L'$ for some $L\subseteq \{1,\dots, r\}$, and 
 defines the quiver variety $\mathfrak{M}_{\theta_L}(\Pi,v)\cong \Sym^n(S_L)$. Suppose for a contradiction that $\tau_K\subset \delta^\perp$ with $\sigma_K^\prime\subsetneq \tau_K$. Then $\tau_K=\sigma_L'$ for some $L \subsetneq K$ satisfying $\Hilb^n(S_K)\cong \Sym^n(S_L)$.
 Variation of GIT quotient for quiver varieties shows there is a commutative diagram 
     \begin{equation}
     \label{eqn:HilbnotSym}
\begin{tikzcd}
  \Hilb^n(S_K) \ar[dr,"\pi"]\ar[rr,"\varepsilon"] & & \Sym^n(S_L)\ar[dl,swap,"\pi'"] \\
  & \Sym^n(S_K) &
  \end{tikzcd}
  \end{equation}
in which $\varepsilon$ is an isomorphism  and $\pi'$ is obtained by applying the functor  $\Sym^n(-)$ to the crepant partial resolution $S_L\to S_K$. There exists $k\in K\setminus L$ that determines a rational curve $\ell_k$ contracted by the morphism $S_L\to S_K$. Let $W_k\subseteq \Sym^n(S_L)$ denote the locus parametrising cycles whose support contains a point in the rational curve $\ell_k$. Then $W_k$ contains the locus parametrising $n$ distinct points in the surface $S_L$ such that one of the points must lie in the curve $\ell_k$. This locus is of codimension-one in $\Sym^n(S_L)$, so $W_k$ and hence $(\pi')^{-1}(\pi'(W_k))$ contains a divisor. However, $\pi'(W_k)$ is contained in the locus $W\subseteq \Sym^n(S_K)$ from Lemma~\ref{lem:preimageW}. Since $\pi^{-1}(W)$ has codimension two,  it follows that $\pi^{-1}(\pi'(W_k))$ has codimension at least two. However, this contradicts the fact that $\varepsilon$ identifies $(\pi')^{-1}(\pi'(W_k))$ and $\pi^{-1}(\pi'(W_k))$, so $\tau_K$ is not contained in $\delta^\perp$ after all.
\end{proof}

\subsection{The dimension of the GIT cone}
Our next goal is to compute the dimension of $\tau_K$ when $n>1$. Corollary~\ref{cor:PicdimC} shows that this coincides with the Picard rank of $\Hilb^n(S_K)$. Initially, we consider the class group over the rationals:
 
 \begin{lemma}
 \label{lem:ClPicS_K}
 Let $n>1$. There is an isomorphism $\Class(\Hilb^n(S_{K}))_{\QQ} \cong \Pic\big(\Sym^n(S_{K})\big)_{\QQ}\oplus \QQ$.
 \end{lemma}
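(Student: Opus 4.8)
The plan is to compare the two sides after deleting loci of codimension two, where the geometry reduces to that of a smooth surface, and then to track carefully the passage between the divisor class group and the Picard group using $\QQ$-factoriality. Since $S_K$ has symplectic (in particular canonical, hence normal) singularities, $\Hilb^n(S_K)$ is normal, so its rational class group is unchanged by removing any closed subset of codimension at least two. By Lemma~\ref{lem:preimageW} the preimage $\pi^{-1}(W)$ has codimension two and its complement is $\Hilb^n(S_K^{\reg})$, giving $\Class(\Hilb^n(S_K))_\QQ \cong \Class(\Hilb^n(S_K^{\reg}))_\QQ$. As $S_K^{\reg}$ is a smooth surface, Fogarty's theorem~\cite{Fogarty68} makes $\Hilb^n(S_K^{\reg})$ smooth, so its class group agrees with its Picard group, and I would identify $\Class(\Hilb^n(S_K))_\QQ \cong \Pic(\Hilb^n(S_K^{\reg}))_\QQ$.

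Next I would analyse the Hilbert--Chow morphism $\pi\colon \Hilb^n(S_K^{\reg}) \to \Sym^n(S_K^{\reg})$ over the smooth locus. This is a proper birational morphism from a smooth variety onto the normal, $\QQ$-factorial variety $\Sym^n(S_K^{\reg})$ (which has only finite quotient singularities), and it is an isomorphism away from the ``big diagonal'' of cycles with a repeated point, a locus of codimension two over which $\pi$ contracts a single prime exceptional divisor $E$ with one-dimensional fibres. Since $E$ meets a fibre of $\pi$ nontrivially its class is nonzero in $\Pic(\Hilb^n(S_K^{\reg}))_\QQ$, so the standard excision sequence for class groups gives a split short exact sequence
\[
0 \longrightarrow \QQ\cdot[E] \longrightarrow \Pic(\Hilb^n(S_K^{\reg}))_\QQ \longrightarrow \Class(\Sym^n(S_K^{\reg}))_\QQ \longrightarrow 0,
\]
whence $\Pic(\Hilb^n(S_K^{\reg}))_\QQ \cong \Class(\Sym^n(S_K^{\reg}))_\QQ \oplus \QQ$. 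Using $\QQ$-factoriality of $\Sym^n(S_K^{\reg})$ to replace $\Class$ by $\Pic$ on the right yields $\Pic(\Sym^n(S_K^{\reg}))_\QQ \oplus \QQ$.

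Finally I would pass back from the smooth locus to $S_K$ on the symmetric-product side. As $\Sym^n(S_K) = \Sym^n(S_K^{\reg}) \sqcup W$ with $W$ of codimension two (Lemma~\ref{lem:preimageW}), deleting $W$ gives $\Class(\Sym^n(S_K))_\QQ \cong \Class(\Sym^n(S_K^{\reg}))_\QQ$. The variety $\Sym^n(S_K)$ is $\QQ$-factorial: by Proposition~\ref{prop:SymtoSym} it is the quiver variety $\mathfrak{M}_{\theta_K'}(\Pi,v)$ with $\theta_K'\in\relint(\sigma_K')$, and since $\sigma_K'$ is itself the minimal face of $F$ whose relative interior contains $\theta_K'$ we have $\dim\sigma_K' = \dim F' = r-|K|$ by Lemma~\ref{lem:C_K'}, so Corollary~\ref{cor:PicdimC}\two\ applies. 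Hence $\Pic(\Sym^n(S_K))_\QQ = \Class(\Sym^n(S_K))_\QQ \cong \Pic(\Sym^n(S_K^{\reg}))_\QQ$, and chaining the isomorphisms produces $\Class(\Hilb^n(S_K))_\QQ \cong \Pic(\Sym^n(S_K))_\QQ \oplus \QQ$.

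The main obstacle, and really the only non-formal point, is the middle step: verifying that over the smooth locus the Hilbert--Chow morphism contracts exactly one prime divisor to a codimension-two locus, and that this divisor contributes precisely one extra $\QQ$-summand (equivalently, that the excision sequence is left-exact over $\QQ$). Everything else is bookkeeping between $\Class$ and $\Pic$, which is controlled by the two $\QQ$-factoriality inputs: the quotient singularities of $\Sym^n(S_K^{\reg})$, and Corollary~\ref{cor:PicdimC}\two\ for $\Sym^n(S_K)$.
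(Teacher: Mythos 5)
Your outer bookkeeping is exactly the paper's: both proofs use Lemma~\ref{lem:preimageW} to delete codimension-two loci, identify $\Class(\Hilb^n(S_K))_\QQ$ with $\Pic(\Hilb^n(S_K^{\reg}))_\QQ$ via normality of $\Hilb^n(S_K)$ and Fogarty's smoothness, and convert between $\Class$ and $\Pic$ on the symmetric-product side using $\QQ$-factoriality. (For that last step the paper invokes quotient singularities and \cite[Proposition~5.15]{KM98} for both $\Sym^n(S_K^{\reg})$ and $\Sym^n(S_K)$, together with extension of reflexive sheaves across $W$; your alternative justification of $\QQ$-factoriality of $\Sym^n(S_K)$ via Proposition~\ref{prop:SymtoSym}, Lemma~\ref{lem:C_K'} and Corollary~\ref{cor:PicdimC}\two\ is also valid, since $\sigma_K'$ is a face of $F$ and hence equals the minimal face $F'$ containing $\theta_K'$ in its relative interior.)

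The divergence is the middle step, and this is where your proposal has a gap. The paper obtains $\Pic(\Hilb^n(S_K^{\reg}))_\QQ \cong \Pic(\Sym^n(S_K^{\reg}))_\QQ \oplus \QQ$ by citing Iarrobino~\cite[Corollary~3]{Iarrobino72}; you instead sketch a proof of this statement via excision along the exceptional divisor of the Hilbert--Chow morphism. That route is viable, but the two facts it needs are precisely the ones you assert without proof. First, that $\pi^{-1}(\Delta)$, where $\Delta$ denotes the big diagonal, contains exactly one prime divisor: this requires knowing that the preimages of the deeper strata of $\Delta$ have codimension at least two, which rests on the dimension bound for punctual Hilbert schemes (the punctual Hilbert scheme of length $m$ on a smooth surface has dimension $m-1$, by Brian\c{c}on's theorem); without it the kernel of the excision map could a priori be spanned by several divisor classes, giving more than one extra $\QQ$-summand. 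Second, that $[E]$ is non-torsion, so that the excision sequence remains left-exact after tensoring with $\QQ$. Your justification --- that $E$ ``meets a fibre of $\pi$ nontrivially'' --- does not suffice: a curve contained in a divisor can have zero intersection number with it (a fibre of a ruled surface meets itself in degree zero). What is needed is $E\cdot C\neq 0$ for some proper contracted curve $C$, which follows from the negativity lemma, or from the classical computation that $\mathcal{O}(E)$ has degree $-2$ on the $\mathbb{P}^1$-fibres of $E$ over the generic stratum of $\Delta$. With these two points supplied --- or simply replaced by the citation of Iarrobino, as the paper does --- your argument is complete.
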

 \begin{proof} 
Recall that the complement of $\Sym^n(S_K^{\reg})$ in $\Sym^n(S_K)$ is the locus $W$ that has codimension two by Lemma~\ref{lem:preimageW}. The same lemma shows that $\pi^{-1}(W)$ has codimension two in $\Hilb^n(S_K)$ and, moreover, that the complement of $\pi^{-1}(W)$ in $\Hilb^n(S_K)$ is $\Hilb^n(S_K^{\reg})$.  Since $\Hilb^n(S_K)$ is normal \cite{CY26} and $\Hilb^n(S_{K}^{\reg})$ is non-singular~\cite{Fogarty68}, there is a $\ZZ$-linear isomorphism
\begin{equation}
    \label{eqn:Clregular}
\Class\!\big(\!\Hilb^n(S_{K})\big) \cong \Pic\!\big(\!\Hilb^n(S_{K}^{\reg})\big). 
\end{equation}
Since $n>1$, Iarrobino~\cite[Corollary~3]{Iarrobino72} shows that 
\begin{equation}
    \label{eqn:Iarrobino}
\Pic\!\big(\!\Hilb^n(S_{K}^{\reg})\big)_{\QQ}\cong \Pic\!\big(\!\Sym^n(S_{K}^{\reg})\big)_{\QQ} \oplus \QQ.
\end{equation}
The surfaces $S_K^{\reg}$ and $S_K$ are non-singular and have Kleinian singularities respectively. Therefore the varieties $\Sym^n(S_K^{\reg})$ and $\Sym^n(S_K)$ both have quotient singularities, so they are normal and $\QQ$-factorial \cite[Proposition~5.15]{KM98}. It follows that
\begin{equation}
\label{eqn:PicClPic}
\Pic\!\big(\!\Sym^n(S_{K}^{\reg})\big)_{\QQ} \cong \Class\!\big(\!\Sym^n(S_{K}^{\reg})\big)_{\QQ} \cong \Class\!\big(\!\Sym^n(S_{K})\big)_{\QQ} \cong\Pic\!\big(\!\Sym^n(S_{K})\big)_{\QQ},
\end{equation}
 where we deduce the second isomorphism by (restriction and) uniqueness of extension of reflexive sheaves across a codimension-two locus in a normal variety~\cite[Proposition~1.6]{Hartshorne80}. Now combine \eqref{eqn:Clregular}, \eqref{eqn:Iarrobino} and \eqref{eqn:PicClPic} to obtain the desired isomorphism of $\QQ$-vector spaces. 
 \end{proof}

 \begin{proposition}
 \label{prop:PicHilb}
  Let $n>1$. The Picard group of $\Hilb^n(S_K)$ has rank $r-\vert K\vert + 1$. 
  In particular, the GIT cone $\tau_K$ from Theorem~\ref{thm:HilbisNQV} has dimension $r-\vert K\vert + 1$.  
 \end{proposition}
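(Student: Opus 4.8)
The plan is to compute the two quantities in the statement simultaneously. By Theorem~\ref{thm:HilbisNQV} we have $\Hilb^n(S_K)\cong \mathfrak{M}_{\theta_K}(\Pi,v)$ with $\theta_K\in\relint(\tau_K)$, so Corollary~\ref{cor:PicdimC}\one\ gives $\rank\Pic(\Hilb^n(S_K))=\dim(\tau_K)$. Thus the two assertions of the proposition are a single number, and the ``in particular'' clause is automatic once the Picard rank is known. I would therefore show that this common value equals $r-\vert K\vert+1$ by trapping it between a matching upper and lower bound, drawn from two independent sources.

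For the upper bound I would pass through the divisor class group. Combining Lemma~\ref{lem:ClPicS_K} with the computation of the Picard rank of $\Sym^n(S_K)$ in Proposition~\ref{prop:SymtoSym} (which is $r-\vert K\vert$) gives $\Class(\Hilb^n(S_K))_\QQ\cong \QQ^{\,r-\vert K\vert+1}$. Since $\Hilb^n(S_K)$ is normal by \cite{CY23}, the natural inclusion $\Pic\hookrightarrow\Class$ is injective, so $\rank\Pic(\Hilb^n(S_K))\leq r-\vert K\vert+1$, and hence $\dim(\tau_K)\leq r-\vert K\vert+1$.

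For the lower bound I would argue entirely with the cone combinatorics. Theorem~\ref{thm:HilbisNQV} exhibits $\sigma_K'$ as a face of $\tau_K$ with $\sigma_K'\subsetneq\tau_K$, so $\sigma_K'$ is a \emph{proper} face and therefore has strictly smaller dimension; since $\dim(\sigma_K')=r-\vert K\vert$ by Lemma~\ref{lem:C_K'}, this forces $\dim(\tau_K)\geq r-\vert K\vert+1$. (The fact that $\tau_K\not\subseteq\delta^\perp$, also supplied by Theorem~\ref{thm:HilbisNQV}, is what guarantees that the extra dimension is realised transversely to $\delta^\perp$, the hyperplane containing $\sigma_K'$.) Matching this with the upper bound yields $\dim(\tau_K)=\rank\Pic(\Hilb^n(S_K))=r-\vert K\vert+1$.

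The main obstacle is that Lemma~\ref{lem:ClPicS_K} controls the \emph{class} group rather than the Picard group, and for a singular variety these can differ, so a priori the Picard rank could drop below $r-\vert K\vert+1$. The resolution is precisely the two-sided squeeze above: the class group bounds the Picard rank from above, while the combinatorics of the GIT cone $\tau_K$ bound it from below, and the two bounds coincide. As a byproduct this shows $\rank\Pic=\rank\Class$, which is exactly the $\QQ$-factoriality of $\Hilb^n(S_K)$ needed for the final clause of Theorem~\ref{thm:mainnew}.
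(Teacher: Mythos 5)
Your proposal is correct and takes essentially the same route as the paper: both arguments equate $\rank\Pic(\Hilb^n(S_K))$ with $\dim(\tau_K)$ via Corollary~\ref{cor:PicdimC}\one, obtain the upper bound $r-\vert K\vert+1$ from normality together with Lemma~\ref{lem:ClPicS_K} and Proposition~\ref{prop:SymtoSym}, and close the gap with a matching lower bound from the cone combinatorics. The only (minor) difference is in the lower bound: you invoke the strict inclusion $\sigma_K'\subsetneq\tau_K$ recorded in the statement of Theorem~\ref{thm:HilbisNQV} and the fact that a proper face of a cone in a fan has strictly smaller dimension, whereas the paper rules out $\dim(\tau_K)=r-\vert K\vert$ by re-deriving the contradiction that this would force $\Hilb^n(S_K)\cong\Sym^n(S_K)$, shown absurd in the argument following \eqref{eqn:HilbnotSym} --- both justifications are legitimate and rest on the same underlying geometric fact.
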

 \begin{proof}
 Since $\Hilb^n(S_K)$ is normal, the Picard group of  $\Hilb^n(S_K)$ is a subgroup of the Class group, so the Picard rank of $\Hilb^n(S_K)$ is at most $r-\vert K\vert + 1$ by Proposition~\ref{prop:SymtoSym} and Lemma~\ref{lem:ClPicS_K}.  On the other hand, since $\tau_K$ contains $\sigma_K'$, Lemma~\ref{lem:C_K'} gives $\dim (\tau_K) \geq r-\vert K\vert$. For part \one, it remains to show that $\dim(\tau_K)\neq r-\vert K\vert$. Suppose otherwise. Then Theorem \ref{thm:HilbisNQV} implies that $\tau_K=\sigma_K'$, but this gives $\Hilb^n(S_K) \cong \Sym^n(S_K)$. We showed following \eqref{eqn:HilbnotSym} that this is absurd. 
\end{proof}
 
\begin{corollary}
\label{cor:Qfactorial}
Let $n\geq 1$. The variety $\Hilb^n(S_K)$ is $\QQ$-factorial. 
\end{corollary}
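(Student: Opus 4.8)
The plan is to dispose of $n=1$ and $n>1$ separately, in each case reducing $\QQ$-factoriality to rank counts that are already available.

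For $n=1$ the statement is essentially contained in Corollary~\ref{cor:n=1}: there one has $\Hilb^1(S_K)\cong S_K$, a surface with Kleinian, hence quotient, singularities, and any variety with quotient singularities is $\QQ$-factorial by \cite[Proposition~5.15]{KM98}. For $n>1$ I would invoke the standard criterion for a normal variety $X$: the cycle map $\Pic(X)\to\Class(X)$ is injective, so tensoring with $\QQ$ exhibits $\Pic(X)_\QQ$ as a subspace of $\Class(X)_\QQ$, and $X$ is $\QQ$-factorial exactly when this subspace is everything; since both groups have finite rank here, it suffices to check that $\rank\Pic(\Hilb^n(S_K))=\rank\Class(\Hilb^n(S_K))$.

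Both ranks have already been computed above. Proposition~\ref{prop:PicHilb} gives $\rank\Pic(\Hilb^n(S_K))=r-\vert K\vert+1$. For the other side, Lemma~\ref{lem:ClPicS_K} gives $\Class(\Hilb^n(S_K))_\QQ\cong\Pic(\Sym^n(S_K))_\QQ\oplus\QQ$, and Proposition~\ref{prop:SymtoSym} records that $\Sym^n(S_K)$ has Picard rank $r-\vert K\vert$; hence $\rank\Class(\Hilb^n(S_K))=r-\vert K\vert+1$ as well. The two ranks coincide, so the inclusion $\Pic(\Hilb^n(S_K))_\QQ\subseteq\Class(\Hilb^n(S_K))_\QQ$ is an equality and $\Hilb^n(S_K)$ is $\QQ$-factorial.

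Most of the substance behind this argument is borne by the earlier results, so I expect the corollary itself to be essentially formal; the genuine input is the class-group computation of Lemma~\ref{lem:ClPicS_K} (resting on Iarrobino's theorem and the codimension-two statement of Lemma~\ref{lem:preimageW}) together with the identity $\rank\Pic=\dim\tau_K$ from Corollary~\ref{cor:PicdimC}\one. The one point I would check carefully is that these rank computations never presuppose $\QQ$-factoriality, so that no circularity is introduced. A purely combinatorial alternative would apply the intrinsic criterion of Corollary~\ref{cor:PicdimC}\two\ directly to $\eta=\theta_K$; there the real obstacle is to identify the minimal face $F'$ of $F$ containing $\theta_K$ in its relative interior and to show $\dim F'=\dim\tau_K$, equivalently that $\tau_K\subseteq\rho_k^\perp$ for every $k\in K$. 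I would favour the rank-comparison route precisely because it avoids this facet-identification step.
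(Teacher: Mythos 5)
Your proof is correct and follows essentially the same route as the paper's: for $n=1$ it reduces to the isomorphism $\Hilb^1(S_K)\cong S_K$ from Corollary~\ref{cor:n=1}, and for $n>1$ it combines Proposition~\ref{prop:PicHilb}, Lemma~\ref{lem:ClPicS_K} and Proposition~\ref{prop:SymtoSym} to see that the Picard and class groups of $\Hilb^n(S_K)$ both have rank $r-\vert K\vert+1$, whence $\QQ$-factoriality. The only difference is that you spell out the formal criterion (injectivity of $\Pic\to\Class$ for normal varieties and the finite-rank comparison) and flag the non-circularity of the rank computations, which the paper leaves implicit.
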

\begin{proof}
 For $n>1$, combine Propositions~\ref{prop:SymtoSym} and \ref{prop:PicHilb}  with Lemma~\ref{lem:ClPicS_K} to see that the ranks of the Picard group and Class group of $\Hilb^n(S_K)$ agree. For $n=1$, see the proof of Corollary~\ref{cor:n=1}.
\end{proof}

 At this stage,  we can already prove Theorem~\ref{thm:mainnew} in the special case $K= \varnothing$, thereby providing an independent proof of the main result of Kuznetsov~\cite{Kuznetsov07} (also established separately by both Haiman and Nakajima). In this case, the cone $\sigma_K$ from Theorem~\ref{thm:mainnew} is
    \[
    \sigma_\varnothing\coloneqq \big\{\theta\in \Theta_v \mid   \theta(\rho_i) \geq (n-1) \theta(\delta) \text{ for }1\leq i\leq r, \; \theta(\delta) \geq 0\big\}. 
    \]

\begin{corollary}[Kuznetsov] \label{cor:Kuznetsov}
Let $n\geq 1$ and let $S\rightarrow \mathbb{A}^2/\Gamma$ denote the minimal resolution. There is an isomorphism $\Hilb^n(S)\cong \mathfrak{M}_\theta(\Pi,v)$ for any $\theta$ in the interior of $\sigma_\varnothing$.
\end{corollary}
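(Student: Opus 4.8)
The case $n=1$ is immediate, since Corollary~\ref{cor:n=1} already establishes Theorem~\ref{thm:mainnew} for $n=1$ and the present statement is exactly its $K=\varnothing$ instance. So I would assume $n>1$, and the plan is to show that the GIT cone $\tau_\varnothing$ supplied by Theorem~\ref{thm:HilbisNQV} coincides with $\sigma_\varnothing$.

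First I would collect the known constraints on $\tau_\varnothing$. Theorem~\ref{thm:HilbisNQV} gives $\Hilb^n(S)\cong\mathfrak{M}_\theta(\Pi,v)$ for $\theta\in\relint(\tau_\varnothing)$, with $\sigma_\varnothing'\subsetneq\tau_\varnothing\subseteq F$ and $\tau_\varnothing\not\subseteq\delta^\perp$, while Proposition~\ref{prop:PicHilb} gives $\dim(\tau_\varnothing)=r+1=\dim(\Theta_v)$. Thus $\tau_\varnothing$ is the closure of a GIT chamber, and since $\sigma_\varnothing'$ has dimension $r$ by Lemma~\ref{lem:C_K'}, it is a codimension-one face of $\tau_\varnothing$ spanning the hyperplane $\delta^\perp$. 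As $\tau_\varnothing\subseteq F\subseteq\{\theta(\delta)\geq 0\}$, the chamber $\tau_\varnothing$ lies on the side $\theta(\delta)>0$ of $\delta^\perp$.

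Next I would record that $\sigma_\varnothing$ has the same features. By the proof of Lemma~\ref{lem:C_K'}, $\sigma_\varnothing$ is the closure of the GIT chamber $\{\theta\mid\theta(\rho_i)>(n-1)\theta(\delta)>0\}$, hence a top-dimensional GIT cone in $F$, and intersecting it with $\delta^\perp$ returns exactly $\sigma_\varnothing'=F\cap\delta^\perp$. So both $\tau_\varnothing$ and $\sigma_\varnothing$ are closures of GIT chambers in $F$ meeting $\delta^\perp$ in the common facet $\sigma_\varnothing'$, from the side $\theta(\delta)>0$. The crux is then the uniqueness of such a chamber, which I would prove locally: fix $\theta_0\in\relint(\sigma_\varnothing')$, so that $\theta_0(\delta)=0$ and $\theta_0(\rho_i)>0$ for all $i\in I$, and therefore $\theta_0(\alpha)>0$ for every $\alpha\in\Phi^+$. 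Then $\theta_0(m\delta\pm\alpha)=\pm\theta_0(\alpha)\neq 0$, so $\delta^\perp$ is the only hyperplane of the arrangement $\mathcal{A}$ from \eqref{eqn:mathcalA} passing through $\theta_0$. Consequently the GIT fan restricts to the single hyperplane $\delta^\perp$ in a small ball about $\theta_0$, and there is exactly one GIT chamber meeting that ball with $\theta(\delta)>0$. Both $\tau_\varnothing$ and $\sigma_\varnothing$ are the closure of that chamber, giving $\tau_\varnothing=\sigma_\varnothing$ and the result.

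I expect the only delicate point---though it is mild here---to be this local uniqueness of the adjacent chamber. It runs smoothly precisely because $\sigma_\varnothing'$ is the full facet $F\cap\delta^\perp$ of $F$, so its relative interior avoids every hyperplane of $\mathcal{A}$ except $\delta^\perp$. This is special to $K=\varnothing$: for general $K$ the face $\sigma_K'$ is smaller and meets several hyperplanes of $\mathcal{A}$, which is why pinning down $\tau_K$ in the full Theorem~\ref{thm:mainnew} demands the finer combinatorial characterisation developed later.
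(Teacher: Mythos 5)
Your proposal is correct and follows essentially the same route as the paper: reduce to $n>1$, use Theorem~\ref{thm:HilbisNQV} and Proposition~\ref{prop:PicHilb} to see that $\tau_\varnothing$ is a top-dimensional GIT cone in $F$ containing $\sigma_\varnothing'$, observe that $\sigma_\varnothing$ (as the closure of the chamber from Lemma~\ref{lem:CK*}, equivalently \cite[Example~2.1(1)]{BC20}) has the same properties, and conclude by uniqueness of such a cone. The only cosmetic difference is that your uniqueness step is argued locally at a point of $\relint(\sigma_\varnothing')$ via the arrangement $\mathcal{A}$, whereas the paper deduces it directly from the fact that $\sigma_\varnothing'$ is the full facet $F\cap\delta^\perp$ of $F$; these are the same geometric fact.
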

\begin{proof}
We may assume $n>1$, so we need only show that $\sigma_\varnothing = \tau_\varnothing$ by  Theorem~\ref{thm:HilbisNQV}. The dimension of $\tau_\varnothing$ equals $r+1$ by Proposition~\ref{prop:PicHilb}, and $\tau_\varnothing$ contains the GIT cone
 \[
\cone{\varnothing}' = \big\{\theta\in \Theta_v \mid \theta(\delta) = 0, \theta(\rho_i) \geq 0 \text{ for }1\leq i\leq r\big\},
\] 
again by Theorem~\ref{thm:HilbisNQV}. We claim there is a unique GIT cone in $F$ with these properties. Indeed, the cone $\sigma_\varnothing^\prime$ has dimension $r$ and is equal to the codimension-one face $F\cap \delta^\perp$ of $F$, so there can be only one GIT cone of dimension $r+1$ in $F$ containing $\sigma_\varnothing^\prime$. It remains to notice that $\sigma_\varnothing$ is a GIT cone of dimension $r+1$ as it is the closure of the GIT chamber from \cite[Example 2.1(1)]{BC20}, and it contains $\sigma_\varnothing^\prime$, so $\sigma_\varnothing = \tau_\varnothing$ as required.
\end{proof}

\subsection{Combinatorial characterisation of the GIT cone}  \label{sec:combinatorial}
  To complete the proof of the first statement in Theorem~\ref{thm:mainnew} for any subset $K\subseteq \{1,\dots,r\}$, we need only show for $n>1$ that the cone $\tau_K$ introduced in Theorem~\ref{thm:HilbisNQV} coincides with the cone $\sigma_K$ from Theorem~\ref{thm:mainnew}.
 
 Our combinatorial approach to this question begins by associating a top-dimensional GIT cone in $F$ to each subset $K\subseteq \{1,\dots, r\}$. 

\begin{lemma}
\label{lem:CK*} 
  For any $K\subseteq I=\{1,\dots,r\}$, the open cone
  \[ 
  C_K \coloneqq \left\{ \theta \in \Theta_v \mid 
  \theta(\rho_k) > 0 \text{ for } k\in K, \;
  \theta(\rho_i) > (n-1)\theta(\delta) \text{ for } i\in I\setminus K, \; 
  \theta(\delta) > \sum_{k\in K} \delta_k \theta(\rho_k)
  \right\}
  \]
  is a GIT chamber. In particular, its closure $\overline{C_K}$ is a GIT cone in $F$ of dimension $r+1$.
 \end{lemma}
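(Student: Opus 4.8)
The plan is to verify directly that $C_K$ is a connected component of the complement of the arrangement $\mathcal{A}$ from \eqref{eqn:mathcalA} lying inside $F$, following the standard recipe for reading off a chamber of a hyperplane arrangement: show that the open cone lands in $\interior(F)$, that it meets no wall, and that each of its bounding hyperplanes is itself a wall.

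First I would check that $C_K\subseteq\interior(F)$. The third defining inequality, together with $\delta_k>0$ and $\theta(\rho_k)>0$ for $k\in K$, forces $\theta(\delta)>0$; the first inequality then gives $\theta(\rho_i)>0$ for $i\in K$, and the second gives $\theta(\rho_i)>(n-1)\theta(\delta)\geq 0$ for $i\in I\setminus K$ since $n\geq 2$. Thus $C_K\subseteq\interior(F)$, and by the remark following \eqref{eqn:F} the only walls that can possibly meet $C_K$ are those of the form $(m\delta-\alpha)^\perp$ with $\alpha\in\Phi^+$ and $0\leq m<n$.

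Next I would show $C_K$ meets no such wall, i.e.\ $\theta(\alpha)\neq m\,\theta(\delta)$ for every $\theta\in C_K$, every $\alpha\in\Phi^+$ and every $0\leq m\leq n-1$, by splitting $\Phi^+$ according to support. Write $\alpha=\sum_{i\in I}a_i\rho_i$ with $a_i\geq 0$. If $a_i>0$ for some $i\in I\setminus K$, then the second inequality yields $\theta(\alpha)\geq\sum_{i\in I\setminus K}a_i\theta(\rho_i)>(n-1)\theta(\delta)\sum_{i\in I\setminus K}a_i\geq(n-1)\theta(\delta)\geq m\,\theta(\delta)$. If instead $\alpha$ is supported on $K$, I would use the classical fact that the coefficients of any positive root are dominated by those of the highest root, namely $a_k\leq\delta_k$; since $\theta(\rho_k)>0$, the third inequality then gives $0<\theta(\alpha)\leq\sum_{k\in K}\delta_k\theta(\rho_k)<\theta(\delta)$, so $\theta(\alpha)$ is strictly between $0$ and $\theta(\delta)$ and hence differs from every $m\,\theta(\delta)$. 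Consequently $C_K$ is an open cone avoiding all walls, so it lies in a single GIT chamber $\overline{C}^{\,\circ}$. A short linear-algebra computation shows that the $r+1$ linear forms defining $C_K$ are linearly independent, so $\overline{C_K}$ is a full-dimensional simplicial cone of dimension $r+1$; in particular $C_K$ is nonempty. To upgrade the inclusion $C_K\subseteq\overline{C}^{\,\circ}$ to an equality, I would identify each facet-hyperplane of $\overline{C_K}$ with a member of $\mathcal{A}$: namely $\rho_k^\perp=(0\cdot\delta-\rho_k)^\perp$ for $k\in K$, the hyperplane $\bigl((n-1)\delta-\rho_i\bigr)^\perp$ for $i\in I\setminus K$, and $\bigl(\delta-\sum_{k\in K}\delta_k\rho_k\bigr)^\perp$. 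Each of these linear forms has constant sign on $\overline{C}^{\,\circ}$; as each holds with the correct sign somewhere on $C_K\subseteq\overline{C}^{\,\circ}$, it holds throughout $\overline{C}^{\,\circ}$, giving $\overline{C}^{\,\circ}\subseteq C_K$ and hence $C_K=\overline{C}^{\,\circ}$, a GIT chamber with closure of dimension $r+1$ as claimed.

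\textbf{Main obstacle.} The first two families of facet-hyperplanes are visibly walls of $\mathcal{A}$, and the linear-independence computation is routine, so the crux is the third facet: I must show that $\delta-\sum_{k\in K}\delta_k\rho_k$ (equivalently $\sum_{k\in K}\delta_k\rho_k$) defines a wall $(\delta-\alpha)^\perp\in\mathcal{A}$, i.e.\ that $\sum_{k\in K}\delta_k\rho_k$ is a positive root and $1<n$. This root-theoretic identification is the delicate point on which the whole argument turns, since without it the cone $\overline{C}^{\,\circ}$ could extend strictly beyond the locus $\{\theta(\delta)>\sum_{k\in K}\delta_k\theta(\rho_k)\}$; I would establish it using the explicit structure of the marks $\delta_k=\dim\rho_k$ in the McKay correspondence for the relevant subdiagram on $K$.
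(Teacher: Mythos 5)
Your first three steps are sound and essentially reproduce the paper's own argument: the paper likewise checks $C_K\subset\interior(F)$, shows that no hyperplane $(m\delta-\alpha)^\perp$ meets $C_K$ by splitting $\Phi^+$ into the roots supported on $K$ (using $a_k\le\delta_k$, exactly as you do) and the rest, and establishes nonemptiness (via an explicit point built from the Coxeter number rather than your linear-independence argument; both are fine). The difference is at your final step, and you have correctly identified the crux: knowing that $C_K$ avoids every hyperplane of $\mathcal{A}$ only gives $C_K\subseteq C$ for some chamber $C$, and equality requires every facet hyperplane of $\overline{C_K}$ to belong to $\mathcal{A}$. For the facets $\rho_k^\perp$ and $\bigl((n-1)\delta-\rho_i\bigr)^\perp$ this is clear, but for the third facet you need $\sum_{k\in K}\delta_k\rho_k$ to be (a multiple of) a positive root, and you leave this unproven. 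This is a genuine gap, and it cannot be filled: the claim is false in general. If $K=\{k\}$ is a single node with mark $\delta_k=2$ (the central node of type $D_4$), then $\sum_{k\in K}\delta_k\rho_k=2\rho_k$ is not a root, since ADE root systems are reduced; if $K=\{1,3\}$ in type $A_3$, then $\rho_1+\rho_3$ is not a root, since its support is disconnected.

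Worse, in these cases the statement itself fails, not merely the proof strategy. Take type $D_4$ with central node $2$, $K=\{2\}$, $n=2$, so $C_K=\{\theta(\rho_2)>0,\ \theta(\rho_i)>\theta(\delta)\text{ for }i\in\{1,3,4\},\ \theta(\delta)>2\theta(\rho_2)\}$. Consider the segment of parameters with $\theta(\rho_1)=\theta(\rho_3)=\theta(\rho_4)=4$, $\theta(\delta)=3$ and $\theta(\rho_2)=1+t$ for $t\in[0,1]$. Every positive root other than $\rho_2$ contains some $\rho_i$ with $i\neq 2$, so along the whole segment one has $\theta(\alpha)\ge 4$ for those roots, $\theta(\rho_2)\in[1,2]$, $\theta(\delta)=3$, and hence none of the forms $\delta$, $\alpha$, $\delta\pm\alpha$ ($\alpha\in\Phi^+$) vanishes anywhere on the segment. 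Thus the endpoints $t=0$ and $t=1$ lie in the same GIT chamber, yet $t=0$ lies in $C_K$ while $t=1$ violates $\theta(\delta)>2\theta(\rho_2)$; so $C_K$ is a proper subset of its chamber and is not a chamber. Note that the paper's proof buries exactly this point in the unjustified sentence ``the supporting hyperplanes of $\overline{C_K}$ all lie in the arrangement $\mathcal{A}$'', which is false for such $K$; your analysis therefore exposes a genuine error in the paper (invisible in its type $A_2$ example, where $\sum_{k\in K}\delta_k\rho_k$ is always a root). The repair is to weaken the third inequality: with $\Phi^+_K$ the set of positive roots supported on $K$, the cone $\{\theta\in\Theta_v \mid \theta(\delta)>0,\ \theta(\rho_k)>0 \text{ for } k\in K,\ \theta(\rho_i)>(n-1)\theta(\delta) \text{ for } i\in I\setminus K,\ \theta(\delta)>\theta(\alpha) \text{ for all } \alpha\in\Phi^+_K\}$ is cut out by sign conditions on members of $\mathcal{A}$, meets no hyperplane of $\mathcal{A}$ by the same case analysis, and hence is a chamber; since $\theta(\alpha)\le\sum_{k\in K}\delta_k\theta(\rho_k)$ on this region, it contains $C_K$, and its closure still has $\sigma_K$ as the face cut out by $\bigcap_{k\in K}\rho_k^\perp$, which is all that the paper's later arguments (Proposition~\ref{prop:C_Kproperties} and Theorem~\ref{thm:mainnew}) actually use.
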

 \begin{proof}
 The result holds for $K = \varnothing$ because $\overline{C_K} =\sigma_K$ in that case. For $K\neq \varnothing$, let  $h:=\sum_{0\leq i\leq r} \delta_i$ denote the Coxeter number of $\Phi$. Define $\theta\in F$ by setting $\theta(\rho_k)=1$ for $k\in K$, $\theta(\delta)=h$ and $\theta(\rho_i) = (n-1)h+1$ for $i\in I\setminus K$. Then $\theta\in C_K$, so $C_K$ is non-empty. The supporting hyperplanes of $\overline{C_K}$ all lie in the arrangement $\mathcal{A}$, so it suffices to show no hyperplane of $\mathcal{A}$ intersects $C_K$. 
  
  For this, let $\theta\in C_K$ be arbitrary. Then  $\theta(\delta)> 0$ and $\theta(\rho_i)>0$ for $1\leq i\leq r$, so $C_K\subset F$ and $\theta$ does not lie in $\delta^\perp$. As noted following \eqref{eqn:F}, 
  every other hyperplane in $\mathcal{A}$ that intersects the interior of $F$ is of the form $(m\delta-\alpha)^\perp$ for some $\alpha\in \Phi^+$ and $0\leq m<n$. Let $\Phi^+_K$ denote the set of positive roots $\alpha \in \Phi^+$ that are of the form $\alpha=\sum_{k\in K} a_k \rho_k$ for some $0\leq a_k\leq \delta_k$. Consider three cases:  if $\alpha\in \Phi^+_K$ and $0 < m < n$, then $m\theta(\delta)> \sum_{k\in K} \delta_k \theta(\rho_k) \geq \theta(\alpha)>0$, so $\theta$ does not lie in $(m\delta-\alpha)^\perp$; if $\alpha\in \Phi^+_K$ and $m=0$, then $\theta(\alpha) \geq \theta(\rho_k)>0$ for some $k \in K$, so $\theta$ does not lie in $\alpha^\perp$; and if $\alpha \in \Phi^+ \setminus \Phi^+_K$ and $0 \leq m < n$, then $m\theta(\delta) \leq (n-1)\theta(\delta) < \theta(\rho_i) \leq \theta(\alpha)$ for some $i \in I \setminus K$, so $\theta$ does not lie in $(m\delta-\alpha)^\perp$. Therefore, no hyperplane in $\mathcal{A}$ intersects $C_K$ as required. 
 \end{proof}

 We now show that the cone $\sigma_K$ from Theorem~\ref{thm:mainnew} shares the same properties as the cone $\tau_K$ from Theorem~\ref{thm:HilbisNQV} (compare Theorem~\ref{thm:HilbisNQV}, Proposition~\ref{prop:PicHilb} and Corollary~\ref{cor:Qfactorial}).
 
\begin{proposition}
\label{prop:C_Kproperties}
 For any subset $K\subseteq I\coloneqq\{1, \dots, r\}$, the cone 
    \[
    \sigma_K\coloneqq \big\{\theta\in \Theta_v \mid  \theta(\rho_k)=0\text{ for }k\in K,\; \theta(\rho_i) \geq (n-1) \theta(\delta) \text{ for }i\in I\setminus K, \; \theta(\delta) \geq 0\big\}
    \]
    is a face of the GIT cone $\overline{C_K}$. Moreover, $\sigma_K$ satisfies the following properties:
    \begin{enumerate}
        \item[\one] it is a GIT cone of dimension $r-\vert K \vert + 1;$
        \item[\two] it contains $\sigma_K^\prime$ and it is not contained in the hyperplane $\delta^\perp;$
        \item[\three] for $\theta_K\in \relint(\sigma_K)$, the Nakajima quiver variety $\M_{\theta_K}(\Pi, v)$ is $\QQ$-factorial.
    \end{enumerate}
\end{proposition}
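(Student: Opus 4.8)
The plan is to realise $\sigma_K$ explicitly as a face of the GIT cone $\overline{C_K}$ from Lemma~\ref{lem:CK*}, and then to read off all three properties from this description together with Corollary~\ref{cor:PicdimC}. First I would pass to the closure $\overline{C_K}$: since $C_K$ is a nonempty full-dimensional open cone, $\overline{C_K}$ is cut out by the non-strict versions of the three families of inequalities defining $C_K$. The key observation is that intersecting $\overline{C_K}$ with the hyperplanes $\rho_k^\perp$ for all $k\in K$ --- that is, setting the facet inequalities $\theta(\rho_k)\geq 0$ to equalities --- collapses the remaining constraint $\theta(\delta)\geq \sum_{k\in K}\delta_k\theta(\rho_k)$ to $\theta(\delta)\geq 0$, leaving precisely the inequalities that define $\sigma_K$. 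A routine two-way containment then gives $\sigma_K=\overline{C_K}\cap\bigcap_{k\in K}\rho_k^\perp$. As this is obtained from $\overline{C_K}$ by setting a subset of its defining inequalities to equalities, it is a face of $\overline{C_K}$; being the face of a cone in the GIT fan, it is itself a GIT cone.

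For the dimension in \one\ and the two assertions in \two, I would first note the containment $\sigma_K\supseteq\sigma_K'$: any $\theta\in\sigma_K'$ has $\theta(\delta)=0$, so $\theta(\rho_i)\geq 0=(n-1)\theta(\delta)$ and $\theta\in\sigma_K$, while conversely $\sigma_K\cap\delta^\perp=\sigma_K'$. To see that $\sigma_K\not\subseteq\delta^\perp$, I would exhibit the explicit point used in the proof of Lemma~\ref{lem:CK*}, with $\theta(\rho_k)=0$ for $k\in K$, $\theta(\delta)=h$, and $\theta(\rho_i)=(n-1)h+1$ for $i\in I\setminus K$; this lies in $\sigma_K$ yet has $\theta(\delta)>0$. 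For the dimension count, the functionals $\{\theta\mapsto\theta(\rho_k)\}_{k\in K}$ are linearly independent on $\Theta_v$ (no nonzero combination of the $\rho_k$ can be a multiple of $v=\rho_\infty+n\delta$, since the framing coordinate forbids it), so $\sigma_K$ lies in the subspace $\bigcap_{k\in K}\rho_k^\perp$ of dimension $(r+1)-|K|$. Since $\sigma_K\supseteq\sigma_K'$ has dimension $r-|K|$ by Lemma~\ref{lem:C_K'} and $\sigma_K$ also contains a point off $\delta^\perp$, its linear span has dimension at least $r-|K|+1$; combined with the upper bound this forces $\dim\sigma_K=r-|K|+1$, establishing \one\ and completing \two.

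For the $\QQ$-factoriality claim \three, I would invoke Corollary~\ref{cor:PicdimC}\two, noting first that $\sigma_K$ is the unique GIT cone with $\theta_K\in\relint(\sigma_K)$. The corollary reduces the statement to checking that $\dim(\sigma_K)$ equals the dimension of the unique face $F'$ of $F$ whose relative interior contains $\theta_K$. The point is to identify $F'$: for $\theta_K\in\relint(\sigma_K)$ one has $\theta_K(\rho_k)=0$ exactly for $k\in K$, while $\theta_K(\delta)>0$ by \two, and hence $\theta_K(\rho_i)\geq(n-1)\theta_K(\delta)>0$ for $i\in I\setminus K$ because $n>1$. Thus the supporting hyperplanes of $F$ vanishing at $\theta_K$ are precisely the $\rho_k^\perp$ with $k\in K$, so $F'=F\cap\bigcap_{k\in K}\rho_k^\perp$. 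As $F$ is simplicial with independent facet functionals $\delta,\rho_1,\dots,\rho_r$, this face has dimension $(r+1)-|K|=\dim(\sigma_K)$, and $\QQ$-factoriality follows.

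The main obstacle, such as it is, lies in the first paragraph: correctly matching the inequality $\theta(\delta)\geq\sum_{k\in K}\delta_k\theta(\rho_k)$ defining $\overline{C_K}$ against the simpler $\theta(\delta)\geq 0$ defining $\sigma_K$, and verifying that the two genuinely coincide on the locus $\{\theta(\rho_k)=0 : k\in K\}$ rather than merely being comparable. Once $\sigma_K$ is pinned down as this specific face, parts \one--\three\ reduce to bookkeeping with dimension counts and the already-established Corollary~\ref{cor:PicdimC}.
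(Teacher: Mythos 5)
Your proposal is correct, and there is no circularity in it: Lemma~\ref{lem:CK*}, Lemma~\ref{lem:C_K'} and Corollary~\ref{cor:PicdimC} are all established before this proposition and none depends on it. For the face identification and parts \one--\two\ you follow essentially the paper's route: the paper likewise realises $\sigma_K$ as $\overline{C_K}\cap\bigcap_{k\in K}\rho_k^\perp$ (the point being exactly the one you flag, that $\sum_{k\in K}\delta_k\theta(\rho_k)$ vanishes on that locus, so the last inequality collapses to $\theta(\delta)\geq 0$), declares \one\ ``clear'', and deduces \two\ from $\sigma_K'=\sigma_K\cap\delta^\perp$; your dimension count via $\sigma_K'\subseteq\sigma_K$ together with the explicit point off $\delta^\perp$ just spells out what the paper leaves implicit. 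Where you genuinely diverge is part \three. The paper argues by hand: listing $K=\{k_1,\dots,k_\ell\}$, it cuts $\overline{C_K}$ by one hyperplane $\rho_{k_j}^\perp$ at a time, notes that each intermediate face is a codimension-one face lying in the boundary of the relevant face of $F$, concludes from Theorem~\ref{thm:BC20} that each VGIT morphism $\M_{\theta_{j-1}}(\Pi,v)\to\M_{\theta_j}(\Pi,v)$ is a divisorial contraction, and propagates $\QQ$-factoriality along this chain via \cite[Corollary~3.18]{KM98}, starting from the nonsingular $\M_{\theta}(\Pi,v)$ with $\theta\in C_K$. You instead apply Corollary~\ref{cor:PicdimC}\two\ directly, identifying the face $F'=F\cap\bigcap_{k\in K}\rho_k^\perp$ whose relative interior contains $\theta_K$ (which requires the observation, which you make, that $\theta_K(\delta)>0$ and hence $\theta_K(\rho_i)>0$ for $i\in I\setminus K$ on $\relint(\sigma_K)$) and checking $\dim(F')=\dim(\sigma_K)=r-\vert K\vert+1$. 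Both arguments are valid; yours is shorter given that the corollary is already in place, and it is really the same mechanism one level removed, since the proof of Corollary~\ref{cor:PicdimC}\two\ is itself the divisorial-contraction argument that the paper re-runs here. What the paper's explicit chain buys is a concrete realisation of $\M_{\theta_K}(\Pi,v)$ as the end of a sequence of divisorial contractions from a nonsingular quiver variety; but the fact about $C_K$ that is reused later (in the proof of Theorem~\ref{thm:mainnew}, namely that $C_K$ is a chamber containing $\sigma_K$ in its closure) comes from Lemma~\ref{lem:CK*} and the face identification, which your argument supplies equally well.
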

\begin{proof}
    Note that $\sigma_K$ is the face of the GIT cone $\overline{C_K}$ from Lemma~\ref{lem:CK*} obtained by intersecting with the supporting hyperplanes $\rho_k^\perp$ of $\overline{C_K}$ for all $k\in K$. Part \one\ is therefore clear, while \two\ follows from the equality $\sigma_K^\prime = \sigma_K\cap \delta^\perp$.
    For part \three, we notice that the cone $\sigma_K$ lies in the face $F'=\{ \theta \in \Theta_v \mid \theta(\rho_k)=0, k \in K\} $ of $F$, which is $r- \vert K \vert +1$ dimensional. Part \one\ shows that $\sigma_K$ is also of dimension $r- \vert K \vert +1$, so $\M_{\theta_K}(\Pi, v)$ is $\QQ$-factorial by Corollary~\ref{cor:PicdimC}\two.
\end{proof}

 \begin{proof}[Proof of Theorem~\ref{thm:mainnew}]
 We may assume that $n>1$. To prove the first statement, it remains to show that the cone $\tau_K$ from Theorem~\ref{thm:HilbisNQV} coincides with $\sigma_K$. Combining Theorem~\ref{thm:HilbisNQV}, Proposition~\ref{prop:PicHilb} and Corollary~\ref{cor:Qfactorial} shows that $\tau_K$ satisfies properties \one-\three\ from Proposition~\ref{prop:C_Kproperties}. Since $\sigma_K$ also satisfies these properties, it remains to show that only one GIT cone satisfies these three properties.

 For this, let $\sigma$ be any GIT cone satisfying properties \one-\three\ from Proposition~\ref{prop:C_Kproperties}, and let $F'\subseteq F$ be the face of minimal dimension satisfying $\sigma \subseteq F'$. Let $\theta\in \relint(\sigma)$, so also $\theta\in \relint(F')$. Since $\M_\theta(\Pi,v)$ is $\QQ$-factorial, Corollary~\ref{cor:PicdimC} gives that
 \begin{equation}
 \label{eqn:dimFdimC}
 \dim(F')=\dim(\sigma) = r-\vert K\vert +1.
 \end{equation}
 Lemma~\ref{lem:C_K'} shows that $\dim \sigma_K^\prime = r-\vert K\vert$, $\sigma_K^\prime \subseteq \sigma$ by assumption and $\sigma_K^\prime$ is a face of $F$, so $\sigma_K^\prime$ is a codimension-one face of $F^\prime$. Now, $F$ is a simplicial cone of dimension $r +1$, so $\vert K \vert +1$ faces of dimension $r- \vert K \vert +1$ in $F$ contain $\sigma_K'$. Precisely $\vert K \vert$ of these faces lie in $F \cap \delta^\perp$ because $\sigma_K' \subseteq \delta^\perp$, so $F^\prime$ is the unique $r- \vert K \vert +1$-dimensional face of $F$ that contains $\sigma_K^\prime$ but does not lie in $\delta^\perp$. Thus
 \[
 F^\prime =  \big\{\theta\in F \mid  \theta(\rho_k)=0\text{ for }k\in K, \; \theta(\delta) \geq 0\big\}.
 \]
 By definition, $\sigma$ is a top-dimensional cone in $F'$, and $\sigma_K' = F' \cap \delta^\perp$ is contained in a unique supporting hyperplane of $F'$, so $\sigma$ is uniquely determined. Therefore $\tau_K=\sigma_K$.

 For the second statement, $S_K$ has symplectic singularities, so $\Hilb^n(S_K)$ is a normal variety with symplectic singularities by \cite{CY26} (this also follows directly via the isomorphism $\Hilb^n(S_K)\cong \mathfrak{M}_{\theta_K}(\Pi,v)$ by results from Bellamy--Schedler~\cite{BS21}). Corollary~\ref{cor:Qfactorial} gives that $\Hilb^n(S_K)$ is $\QQ$-factorial, so it remains to show that $\mathfrak{M}_{\theta_K}(\Pi,v)$ admits a unique projective, symplectic resolution. For this, it suffices by \cite[Theorem~1.2]{BC20} to show that there is a unique GIT chamber in $F$ containing $\sigma_K$ in its closure. Recall from Proposition~\ref{prop:C_Kproperties} that $C_K$ is one such chamber. For uniqueness, let $C$ be another such chamber.  If there exists $\alpha\in \Phi^+_K$ and $0 \leq m < n$ such that $\theta(m\delta\pm\alpha)<0$, then $m\theta(\delta)<0$, contradicting $C\subset F$. Alternatively, if there exists $\alpha \in \Phi^+ \setminus \Phi^+_K$ and $0 \leq m < n$ such that $\theta(m\delta-\alpha) >0$, then $(n-1)\theta(\delta) \geq m\theta(\delta)>\theta(\alpha)\geq \theta(\rho_j)$ for some $j\in J$. Since $\theta$ is arbitrary in $C$, any $\theta_K\in \relint(\sigma_K)$ must satisfy the resulting weak inequality $(n-1)\theta_K(\delta)\geq \theta_K(\rho_j)$, a contradiction. Since we have analysed all the hyperplanes cutting the interior of $F$, the chamber $C$ satisfies precisely the same defining inequalities as $C_K$ after all.
 \end{proof}

 \section{Computing the nef and movable cones}
  We now use our description of $\Hilb^n(S_K)$ as the Nakajima quiver variety $\mathfrak{M}_{\theta_K}(\Pi,v)$ defined by $\theta_K\in \relint(\sigma_K)$ to compute the nef and movable cones of $\Hilb^n(S_K)$. For the chamber $C_K$ from Lemma~\ref{lem:CK*} that contains $\sigma_K$ in its closure, and for $\theta\in C_K$, write
 \begin{equation}
 \label{eqn:hn}
h\colon \mathfrak{M}_{\theta}(\Pi,v)\longrightarrow \mathfrak{M}_{\theta_K}(\Pi,v)
 \end{equation}
 for the morphism obtained by variation of GIT quotient. For $i\in Q_0$, write $\mathcal{R}_i$ for the corresponding tautological bundle on $\mathfrak{M}_{\theta}(\Pi,v)$, and write $\delta_J:=(\dim \rho_j)\in \NN^J$ where $J\coloneqq \{0,1,\dots, r\}\setminus K$.

 \begin{proof}[Proof of Theorem~\ref{thm:NefHilb}]
 The nef cone is of top-dimension in $\Pic(\Hilb^n(S_K))\otimes_\ZZ \QQ$ because $\Hilb^n(S_K)$ is projective over $\Sym^n(\mathbb{A}^2/\Gamma)$, so part \one\ follows once we prove part \two. For this, the proof of Corollary~\ref{cor:PicdimC}\one\ shows that pulling back along a VGIT morphism of the form $h$ defines an injective map on rational Picard groups that identifies $\Nef(\mathfrak{M}_{\theta_K}(\Pi,v))$ with the cone 
 \[
 L_{C_K}(\sigma_K) = \Big\{\bigotimes_{0\leq i\leq r}\det(\mathcal{R}_i)^{\otimes \vartheta_i} \mid \vartheta\in \sigma_K\Big\}.
 \] 
Part \two\ follows from the definition of $\sigma_K$ and the isomorphism  $\Hilb^n(S_K)\cong \mathfrak{M}_{\theta_K}(\Pi,v)$ from Theorem~\ref{thm:mainnew}. For part \three, let $F'$ denote the minimal face of $F$ containing $\sigma_K$. Since $\dim \sigma_K = \dim F'$, equation \eqref{eqn:pullbackMov} in the proof of Corollary \ref{cor:PicdimC}\two\ shows that the pullback of the movable cone of $\mathfrak{M}_{\theta_K}(\Pi,v)$ is identified with
 \[ L_{C_K}(F') = \Big\{\bigotimes_{0\leq i\leq r}\det(\mathcal{R}_i)^{\otimes \vartheta_i} \mid \vartheta\in F'\Big\}. \] 
 The definition of the face $F'$ and the isomorphism $\Hilb^n(S_K)\cong \mathfrak{M}_{\theta_K}(\Pi,v)$ from Theorem~\ref{thm:mainnew} shows that the movable cone of $\Hilb^n(S_K)$ is identified with
  \[
         \Big\{\textstyle{\bigotimes_{j\in J} \det(\mathcal{R}_j)^{\otimes \eta_j}} \mid 
     \eta(\rho_j) \geq 0 
     \text{ for }j\neq 0, \; 
     \eta(\delta_J)\geq 0\Big\}
    \]
as required.
For the fan decomposition, the face $F'$ is decomposed by the hyperplanes in $\mathcal{A}$ that intersect $\relint(F')$; these are precisely   
 $(m\delta_J- \alpha)^\perp$ for $\alpha\in \Phi_{I\setminus K}^+$ and $0\leq m < n$, where $\Phi_{I\setminus K}^+$ is the set of positive roots supported in $I\setminus K=J\setminus \{0\}$.
 \end{proof}

\small{
\bibliographystyle{plain}

\begin{thebibliography}{10}

\bibitem{Beauville83}
A.~Beauville.
\newblock Vari\'{e}t\'{e}s {K}\"{a}hleriennes dont la premi\`ere classe de
  {C}hern est nulle.
\newblock {\em J. Differential Geom.}, 18(4):755--782, 1983.

\bibitem{Beauville00}
A.~Beauville.
\newblock Symplectic singularities.
\newblock {\em Invent. Math.}, 139(3):541--549, 2000.

\bibitem{BC20}
G.~Bellamy and A.~Craw.
\newblock Birational geometry of symplectic quotient singularities.
\newblock {\em Invent. Math.}, 222(2):399--468, 2020.

\bibitem{BCS26}
G.~Bellamy, A.~Craw, and T.~Schedler.
\newblock Birational geometry of quiver varieties and other {GIT}
  quotients.
\newblock {\em Compos.~Math.}, 162:630–668, 2026.

\bibitem{BS21}
G.~Bellamy and T.~Schedler.
\newblock Symplectic resolutions of quiver varieties.
\newblock {\em Selecta Math. (N.S.)}, 27(3):Paper No. 36, 50, 2021.

\bibitem{Benson93}
D.~Benson.
\newblock {\em Polynomial invariants of finite groups}, volume 190 of {\em
  London Mathematical Society Lecture Note Series}.
\newblock Cambridge University Press, Cambridge, 1993.

\bibitem{BW26}
L.~Bertsch and R.~Wye.
\newblock \emph{Stacky resolutions of Kleinian singularities and Nakajima Quiver Varieties}.
\newblock (2026), in preparation.

\bibitem{Craw23}
A.~Craw.
\newblock An introduction to {H}ilbert schemes of points on {ADE}
  singularities.
\newblock In {\em Mc{K}ay correspondence, mutation and related topics},
  volume~88 of {\em Adv. Stud. Pure Math.}, pages 119--157. Math. Soc. Japan,
  Tokyo, [2023] \copyright2023.

\bibitem{CGS21}
A.~Craw, S.~Gammelgaard, \'{A}. Gyenge, and B.~Szendr\H{o}i.
\newblock Punctual {H}ilbert schemes for {K}leinian singularities as quiver
  varieties.
\newblock {\em Algebr. Geom.}, 8(6):680--704, 2021.


\bibitem{CY26}
A.~Craw and R.~Yamagishi.
\newblock \emph{Hilbert schemes of points on canonical surfaces}, 2026,
  \newblock \url{https://arxiv.org/abs/2607.08913}.

\bibitem{Fogarty68}
J.~Fogarty.
\newblock Algebraic families on an algebraic surface.
\newblock {\em Amer. J. Math.}, 90:511--521, 1968.

\bibitem{GanGinzburg06}
W.~Gan and V.~Ginzburg.
\newblock Almost-commuting variety, {$\mathscr{D}$}-modules, and {C}herednik
  algebras.
\newblock {\em IMRP Int. Math. Res. Pap.}, pages 26439, 1--54, 2006.
\newblock With an appendix by Ginzburg.

\bibitem{Hartshorne80}
R.~Hartshorne.
\newblock Stable reflexive sheaves.
\newblock {\em Math. Ann.}, 254(2):121--176, 1980.

\bibitem{Iarrobino72}
A.~Iarrobino.
\newblock Punctual {H}ilbert schemes.
\newblock {\em Bull. Amer. Math. Soc.}, 78:819--823, 1972.


\bibitem{KS24}
D.~Kaplan, and T.~Schedler.
\newblock Crepant Resolutions of Stratified Varieties via Gluing.
\newblock {\em IMRN}, 2024(17): 12161--12200, 2024.



\bibitem{King94}
A.~King.
\newblock Moduli of representations of finite-dimensional algebras.
\newblock {\em Quart. J. Math. Oxford Ser. (2)}, 45(180):515--530, 1994.

\bibitem{KM98}
J.~Koll\'ar and S.~Mori.
\newblock {\em Birational geometry of algebraic varieties}, volume 134 of {\em
  Cambridge Tracts in Mathematics}.
\newblock Cambridge University Press, Cambridge, 1998.
\newblock With the collaboration of C. H. Clemens and A. Corti, Translated from
  the 1998 Japanese original.

\bibitem{Kronheimer89}
P.~Kronheimer.
\newblock The construction of {ALE} spaces as hyper-{K}\"ahler quotients.
\newblock {\em J. Differential Geom.}, 29(3):665--683, 1989.

\bibitem{Kuznetsov07}
A.~Kuznetsov.
\newblock Quiver varieties and {H}ilbert schemes.
\newblock {\em Mosc. Math. J.}, 7(4):673--697, 767, 2007.

\bibitem{LazarsfeldI}
R.~Lazarsfeld.
\newblock {\em Positivity in algebraic geometry. {I}}, volume~48 of {\em
  Ergebnisse der Mathematik und ihrer Grenzgebiete. 3. Folge. A Series of
  Modern Surveys in Mathematics [Results in Mathematics and Related Areas. 3rd
  Series. A Series of Modern Surveys in Mathematics]}.
\newblock Springer-Verlag, Berlin, 2004.
\newblock Classical setting: line bundles and linear series.

\bibitem{McKay80}
J.~McKay.
\newblock Graphs, singularities, and finite groups.
\newblock In {\em The {S}anta {C}ruz {C}onference on {F}inite {G}roups ({U}niv.
  {C}alifornia, {S}anta {C}ruz, {C}alif., 1979)}, volume~37 of {\em Proc.
  Sympos. Pure Math.}, pages 183--186. Amer. Math. Soc., Providence, R.I.,
  1980.

\bibitem{Nakajima98}
H.~Nakajima.
\newblock Quiver varieties and {K}ac-{M}oody algebras.
\newblock {\em Duke Math. J.}, 91(3):515--560, 1998.

\bibitem{Ohta20}
R.~Ohta.
\newblock On the relative version of {M}ori {D}ream {S}paces.
\newblock \emph{Eur. J. Math.} 8 (2022), suppl. 1, S147–S181.


\end{thebibliography}

}
\end{document}